\definecolor{marin}{rgb}   {0.,   0.3,   0.7} 
\definecolor{rouge}{rgb}   {0.8,   0.,   0.} 
\definecolor{sepia}{rgb}   {0.8,   0.5,   0.} 
\newtheorem{lemma}{Lemma}[section]
\newtheorem{theorem}[lemma]{Theorem}
\newtheorem{proposition}[lemma]{Proposition}
\newtheorem{corollary}[lemma]{Corollary}
\newtheorem{remark}[lemma]{Remark}
\newtheorem{example}[lemma]{Example}
\newtheorem{notation}[lemma]{Notation}
\newtheorem{definition}[lemma]{Definition}
\newtheorem{conclusion}[lemma]{Conclusion}
\numberwithin{equation}{section}
\newcommand{\QED}{\mbox{}\hfill \raisebox{-0.2pt}{\rule{5.6pt}{6pt}\rule{0pt}{0pt}} 
          \medskip\par}
\newcommand{\ad}{\mathrm{ad}}
\newcommand{\Ac}{\mathcal{A}}
\newcommand{\dd}{\mathrm{d}}
\newcommand{\Lc}{\mathcal{L}}
\newcommand{\N}{\mathbb{N}}
\newcommand{\R}{\mathbb{R}}
\newcommand{\Fc}{\mathcal{F}}
\newcommand{\C}{\mathbb{C}}
\newcommand{\T}{\mathbb{T}}
\newcommand{\Z}{\mathbb{Z}}
  \newcommand{\e}{{\rm e}}
\newcommand{\Norm}[2]{\|#1\|\left.\vphantom{T_{j_0}^0}\!\!\right._{#2}} 
\newcommand{\DNorm}[2]{\llbracket#1\rrbracket\left.\vphantom{T_{j_0}^0}\!\!\right._{#2}}         
\title{Discrete pseudo-differential operators and applications to numerical schemes}        
\author{Erwan Faou{\small$\,^1$} and Beno\^it Gr\'ebert{\small$\,^2$}\\[4ex]
{\small$\,^1$}\,
\small 
INRIA Rennes, Univ Rennes \& Institut de Recherche Math\'ematiques de Rennes,\\[-1ex] 
\small CNRS UMR 6625 Rennes \&
ENS Rennes, France.\\[-1ex]
\small Campus Beaulieu F-35042 Rennes Cedex, France.\\[-1ex]
\it \small email: \tt Erwan.Faou@inria.fr\\[2ex]
%
%
{\small$\,^2$}\, \small Laboratoire de Math\'ematiques Jean Leray,
Universit\'e de Nantes,\\[-1ex]
\small 2, rue de la Houssini\`ere
F-44322 Nantes cedex 3, France. \\[-1ex]
\small\it email: \tt benoit.grebert@univ-nantes.fr
}       
\begin{document}
\maketitle
\begin{abstract}
We define a class of discrete operators acting on infinite, finite or periodic sequences  mimicking the standard properties of pseudo-differential operators. In particular we can define the notion of order and regularity, and we recover the fundamental property that the commutator of two discrete operators gains one order of regularity. We show that standard differential operators acting on periodic functions, finite difference operators and fully discrete pseudo-spectral methods fall into this class of discrete pseudo-differential operators. As examples of practical applications, we revisit standard error estimates for the convergence of splitting methods, obtaining in some Hamiltonian cases no loss of derivative in the error estimates, in particular for discretizations of general waves and/or water-waves equations. Moreover, we give an example of preconditioner constructions inspired by normal form analysis to deal with the similar question for more general cases. 

\end{abstract}


\tableofcontents

\section{Introduction} 

The usefulness of pseudodifferential calculus is no longer in question \cite{H\"or87,Tay81} and if forms one of the corner stone of the analysis of Partial Differential Equations (PDEs). Indeed, even for simple differential operators, the notion of inverse, flow, approximations and any functional calculus in term of the operator can be expressed in term of pseudodifferential operators. 

The undeniable power of this calculus resides, in particular, in a property  of the commutators:  if two pseudodifferential operators $A$ and $B$ are of order $r_1$ and $r_2$ respectively, then the order of the commutator $[A,B]$ is $r_1+r_2-1$ and not $r_1+r_2$ like the product $AB$. This property plays a central role in the consistency of the definition of pseudodifferential calculus, and is fundamental for obtaining estimates allowing to define the existence of solutions or to analyze their long time behavior. To cite a few examples, it has in particular been used in normal forms theory (see for instance \cite{BGMR}) but also in the definition solution with low regularity \cite{DL89} or on the analysis of scattering effects \cite{GV85} which often rely on commutator estimates. 

Unfortunately, when we represent these operators in a Hilbert basis of the space where they act (typically the Fourier basis), we obtain matrices and we lose this property: the regularity of the operators leads to a certain decrease of the coefficients of the matrices which represent them, but this notion of decrease is usually not quantified in such a way that the "miracle of the commutators" is preserved. We thus lose an important property of the differential operators when we model them by a numerical scheme. 
Despite this difficulty, commutator estimates plays a fundamental role in numerical analysis, in particular for splitting methods, see \cite{JL,Fao12,CCFM17},  or Magnus expansions and  Baker-Cambell-Hausdorf formula \cite{HLW06}. But in general classical estimates can only be obtained in specific explicit cases and yield in general to loss of derivatives (although striking recent progresses has been done in this direction, see for instance \cite{ORS21}). 

\medskip

Otis Chodosh overcame this difficulty in his thesis (see \cite{Cho1, Cho2}) by characterizing the space of infinite matrices representing pseudo-differential operators. This space encodes a notion of order which leads to a property of commutators similar to the one enjoyed by the space of pseudo-differential operators. It is this space of infinite matrices that we take up here (see Definition \ref{cho}) and that we call space of pseudo-differential matrices.  In passing we re-demonstrate the "miracle of the commutators" in a direct way. We then show that the standard pseudo-differential operators used in the world of PDEs are represented by matrices of this type (see Section \ref{diffOp}). 
One of the main result of this paper is then to extend Chodosh's class to a context of {\em periodic matrices} which allows us to show that standard discretized models of PDEs including finite difference methods, spectral or pseudo-spectral methods, are in fact represented by pseudo-differential periodic matrices (see section \ref{per}) and satisfy the wished commutator estimates. Of course to be useful, the important point is that these estimates are {\em uniform} with respect to the discretization parameters, to be consistent with the continuous limit.  With these results in hand, we expect to revisit and extend many results of analysis and numerical analysis within this new framework. It include particular  space discretized equations (or more generally lattice dynamics like FPUT or Discrete nonlinear Schr\"odinger equations), time discretization of PDEs (semi-discrete or fully discrete) or long time behavior of numerical schemes. 
 
 \medskip

To highlight all the advantages we can get from this class of matrices, we give several examples where the discrete pseudo-differential algebra yields new results. In section \ref{error}, we revisit and amplify (see Theorem \ref{thm-local}) a result of Jahnke and Lubich (see \cite{JL} ) on error estimates in splitting schemes. Their result was based on an assumption of gain of regularity of the commutator between the two part of the splitting, which is immediately satisfied if the operators concerned are in our class. Thus while Jahnke and Lubich verify this assumption only in the case of the Schr\"odinger equation (or more generally PDEs where the commutators can be calculated explicitely, {\em i.e.} involve functions and classical differential operators) we can ensure with our technics that it is automatically satisfied for any reasonable PDE and their space discretizations.  Moreover the result extends to operators of arbitrary orders without restrictions. As a consequence of this abstract analysis, we can characterize examples of situations where convergence of splitting schemes is guaranteed without any loss of regularity. This means that we can estimate the error committed by the scheme in the same regularity as that imposed on the solution we are estimating. 
\\

We notice that this latter situation occurs for splitting methods of high order as long as the modeled operator is of order strictly smaller than one (see Remark \ref{rk2}). We develop an example of application from the water waves models in section \ref{water} where the linear water wave operator is a pseudo-differential operator of order $\frac12$. Moreover, as a consequence of our analysis, the same result holds for fully discrete models obtained by spectral or finite difference approximations.   \\
Nevertheless this constraint to start from an operator of order strictly smaller than one is very restrictive and clearly not satisfied in many interesting cases. As a second example of application of our discrete pseudo-differential calculs, in section \ref{precon}, we show that by using technics inspired by  normal forms theory (see in particular \cite{BGMR}) we can circumvent this constraint by changing the unknown. This change of variable acts as a {\em preconditioner} for the splitting scheme concerned and it is described by a discrete pseudo-differential operator that we can estimate. We develop this technique in the framework of the Schr\"odinger equation (see section \ref{precon}) but we could describe the method in an abstract way in the formalism of the normal forms, as done for instance in \cite{BGMR}. In this paper, we focus on the emblematic example of the Schr\"odinger equation as proof of concept, and we propose in Proposition \ref{yesmyfriend} a pre- and post- processed Lie-splitting scheme of order one to approximate the solutions of the  Schr\"odinger equation \eqref{LS0} without loss of derivative. 

Note that the use of our class of operators and commutator estimates could certainly be very useful for time dependent problems and the analysis of Magnus integrators, both from the convergence point of view or on the obtention of long time estimates. These analysis will be the subject of further studies. 

In Section \ref{sobgrr} we also consider as application of our analysis the problem of Sobolev norm growth for linear problem with time dependent potential. We give a fairly general result that applies both to continuous and space discretized models. This also shows that the assumptions made for splitting propagators are fulfilled in many situations. 

\medskip

Notations: In the following, $x \lesssim  y$ means that $x \leq C y$ for a constant independent of $x$ and $y$, and 
$x \lesssim_\alpha y$ means that $C$ depends on $\alpha$. 

\subsection*{Acknowledgement} 
During the preparation of this work the two authors benefited from the support of the Centre Henri Lebesgue ANR-11-LABX- 0020-01 and B.G. was supported 
 by ANR -15-CE40-0001-02  ``BEKAM''   of the Agence Nationale de la Recherche. Furthermore B.G. thanks INRIA and particularly the MINGuS project for hosting him for a semester.

\section{A class of pseudo differential matrices}
We consider the  space of summable squares of complex or real numbers $\ell^2(\Z^d)$ indexed by $\Z^d$, $d \in \N$  a positive integer. Typically, a sequence of this space represents (an approximation of) the Fourier coefficients of a function defined on a periodic torus $\T^d := (\R / 2\pi \Z)^d$, see Section \ref{diffOp} for examples.\\ 
 For $s\in\R$ we define the discrete Sobolev space, $h^s$, by
$$h^s:=\{(x_k)_{k\in\Z^d}\in \C^{\Z^d}\mid \sum_{k\in\Z^d}(1+|k|)^{2s}|x_k|^2 <+\infty\}$$
where we will use the norm $|k| = |k_1| + \cdots +|k_d|$ for elements $k = (k_1,\ldots,k_d)$ in $\Z^d$ 
and we equip this Hilbert space with its natural norm: $\Norm{x}{s}=\big( \sum_{k\in\Z^d}(1+|k|)^{2s}|x_k|^2\big)^{\frac12}$. We note that the dual of $h^s$ is $h^{-s}$. 
An infinite matrix, $A: \ \Z^d\times\Z^d\mapsto \C$ is identified with the collection of its complex elements $A := \{A(m,n)\}_{(m,n) \in \Z^d \times \Z^d}$. 

For an infinite matrices $A: \ \Z^d\times\Z^d\mapsto \C$ and $j \in \{1,\ldots,d\}$, we denote $A^{j,+}$ and $A^{j,-}$ the infinite matrices defined by
$$A^{j,+}(m,n)=A(m+e_j,n+e_j)\quad \text{and}\quad A^{j,-}(m,n)=A(m-e_j,n-e_j), $$
where $e_j$ denotes the element of $\Z^d$ with components $(e_j)_{n} = \delta_{jn}$, the Kronecker symbol, for $n = 1, \ldots, d$. 
Then we define the following operators  on matrices: $\Delta_j^+ A=A^{j,+}-A$ and $\Delta_j^{-}A=A^{j,-}-A$ and for $\alpha \in \Z$ we define 
$$
\Delta_j^\alpha = \left|
\begin{array}{l}
(\Delta_j^+)^{\alpha} \quad \mbox{if} \quad \alpha \geq 0, \\[2ex]
(\Delta_j^-)^{|\alpha|} \quad \mbox{if} \quad \alpha \leq 0. 
\end{array}
\right.
$$
By convention $\Delta_j^0={\rm Id}$. For $\alpha=(\alpha_j)_{j=1,\cdots,d}\in\Z^d$ we define the finite difference operator $\Delta^\alpha$ acting on the set of infinite matrices $A: \ \Z^d\times\Z^d\mapsto \C$  by 
$$\Delta^\alpha=\Delta_1^{ \alpha_1}\cdots\Delta_d^{\alpha_d}.$$
Following the definition introduced by Chodosh (see \cite{Cho1, Cho2}), we define: 
\begin{definition}\label{cho}
Let $r\in\R$ we define the class $\Ac_r$ of pseudo differential matrices of order $r$ by:\\ $A\in\Ac_r$ if for all $\alpha\in\Z^d$  and all $N\in\N$ there exists $C_{N,\alpha}\geq 0$ such that
$$\big| (\Delta^\alpha A)(m,n)\big|\leq C_{N,\alpha}(1+|m|+|n|)^{r-|\alpha|}(1+|m-n|)^{-N},\quad \forall m,n\in\Z^d$$
where $|m|=|m_1|+\cdots+|m_d|$ and $|\alpha| = |\alpha_1| + \cdots + |\alpha_d|$. 
\end{definition}
We denote  $$\Ac=\cup_{r\in\R}\Ac_r$$
which form a graded algebra, as the next lemma will show.\\
For $r\in\R$, $\Ac_r$ is a Frechet space when equipped with the family of semi-norms
$$\Norm{A}{\alpha,N,r}:= \sup_{m,n\in \Z^d}\frac{\big| (\Delta^\alpha A)(m,n)\big| (1+|m-n|)^{N}}{(1+|m|+|n|)^{r-|\alpha|}}.$$
Note that as $\Delta^\alpha A$ is linear in $A$, we immediately have the estimate 
$$
\Norm{A + B}{\alpha,N,r} \leq \Norm{A}{\alpha,N,r} + \Norm{B}{\alpha,N,r} 
$$
and in particular, if $A$ is of order $r$ and $B$ of order $r'$, then $A+ B$ is of order $\max(r , r')$. 

\subsection{Associated operators}

\begin{lemma}\label{op}
Let $r\in\R$. An infinite matrix $A\in\Ac_r$ naturally defines a continuous operator, still denoted by $A$, from $h^s$ to $h^{s-r}$ for any $s\in\R$ via the formula
\begin{equation}\label{Ax}
(Ax)_m=\sum_{n\in\Z^d}A(m,n)x_n, \quad m\in\Z^d.\end{equation}
 Furthermore we have
 $$\Norm{A}{\Lc(h^s,h^{s-r})}\leq C \Norm{A}{0,|s|+|r|+d+1,r}$$
 for some constant $C$ depending only on $d$, $r$ and $s$. 
\end{lemma}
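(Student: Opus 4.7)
The plan is to reduce the bound to a Schur-type $\ell^2\to\ell^2$ estimate on a weighted kernel. Setting
$$\tilde{A}(m,n) := (1+|m|)^{s-r} A(m,n)(1+|n|)^{-s},$$
the isometric weight identifications $h^s\cong\ell^2(\Z^d)$ via $x\mapsto\bigl((1+|n|)^s x_n\bigr)_n$ and $h^{s-r}\cong\ell^2(\Z^d)$ via multiplication by $(1+|m|)^{s-r}$ turn the map $A:h^s\to h^{s-r}$ into the operator with kernel $\tilde{A}$ on $\ell^2(\Z^d)$, with the same operator norm. It then suffices to bound $\sup_m\sum_n|\tilde{A}(m,n)|$ and $\sup_n\sum_m|\tilde{A}(m,n)|$ by a multiple of $\Norm{A}{0,N,r}$ and invoke Schur's test. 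Well-definedness of $(Ax)_m$ for $x\in h^s$ then follows a posteriori from the same pointwise bound via Cauchy--Schwarz.

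The main analytic ingredient consists of two Peetre-type inequalities. Starting from the defining $\alpha=0$ estimate
$$|A(m,n)| \leq \Norm{A}{0,N,r}(1+|m|+|n|)^r (1+|m-n|)^{-N},$$
I would first show that for every $r\in\R$,
$$(1+|m|+|n|)^r \leq C_r (1+|m|)^r (1+|m-n|)^{|r|},$$
by splitting the cases $r\geq 0$ (using $|n|\leq|m|+|m-n|$, which yields $1+|m|+|n|\leq 2(1+|m|)(1+|m-n|)$) and $r<0$ (where the cruder monotonicity $(1+|m|+|n|)^r\leq (1+|m|)^r$ suffices, multiplied by the trivial factor $(1+|m-n|)^{|r|}\geq 1$). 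Second, from $1+|m|\leq(1+|n|)(1+|m-n|)$ together with its symmetric counterpart one derives, by discussing the sign of $t$, the classical Peetre inequality $(1+|m|)^t\leq(1+|n|)^t(1+|m-n|)^{|t|}$ valid for any $t\in\R$, which I apply with $t=s$.

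Inserting both inequalities into the definition of $\tilde{A}(m,n)$ gives
$$|\tilde{A}(m,n)| \leq C_r \Norm{A}{0,N,r}\,(1+|m-n|)^{|s|+|r|-N}.$$
Choosing $N=|s|+|r|+d+1$ makes the exponent at most $-(d+1)$, so the right-hand side is a summable function of $m-n$ on $\Z^d$ with sum depending only on $d$. Both Schur suprema are then bounded by $C_{d,r,s}\Norm{A}{0,|s|+|r|+d+1,r}$, and Schur's lemma yields the stated operator-norm estimate. The only mild obstacle is bookkeeping the signs of $r$ and $s$ in the two Peetre estimates so that the exponent of $(1+|m-n|)$ comes out uniformly as $|s|+|r|-N$; once that is set up, no seminorm beyond $\alpha=0$ is needed and the Schur step is routine.
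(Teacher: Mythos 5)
Your proof is correct and follows essentially the same route as the paper: the same weight conjugation, the same two Peetre-type inequalities handled by the same case distinctions on the signs of $r$ and $s$, and the same choice $N=|s|+|r|+d+1$ reducing everything to a kernel bounded by a summable function of $m-n$. The only difference is cosmetic — you conclude with Schur's test where the paper applies Young's inequality for convolutions, and for a convolution-dominated kernel these two steps are interchangeable.
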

\proof The proof is quite standard, for the sake of completeness, we include it.\\
Let $x\in h^s$ and $A\in\Ac_r$. We denote $\tilde x$ the element of $\ell^2(\Z^d)\equiv h^0$ defined by $\tilde x_k=(1+|k|)^{s}x_k$, $k\in\Z^d$.  We have
\begin{align*}
\Norm{ Ax}{s-r}^2&=\sum_n(1+|n|)^{2(s-r)}\Big|\sum_k A(n,k)x_k\Big|^2\\
&\leq \Norm{A}{0,N,r}^2\sum_n \Big(\sum_k\frac{(1+|n|+|k|)^r(1+|n|)^{s-r}}{(1+|n-k|)^N(1+|k|)^{s}} |\tilde x_k|\Big)^2
\end{align*}
Now we use the fact that 
$$
(1+|n|+|k|)^r\lesssim_r (1+|n|)^r(1+|n-k|)^{|r|} 
$$
for all $r \in \R$. Indeed, for $r \geq 0$, it is a consequence of the triangle inequality 
$$
1+|n|+|k| \leq 1 + 2|n| + |n-k| \leq 2(1 + |n|) (1 + |n-k|) 
$$
and for $r < 0$, the inequality if equivalent to 
$$
(1+|n|+|k|)^{|r|}\gtrsim_r (1+|n|)^{|r|}(1+|n-k|)^{-|r|} 
$$
which is implied by 
$$
(1 + |n|) \leq (1+|n|+|k|)(1+|n-k|)
$$
which is true by the triangle inequality again. 
Similarly, we have $(1+|n|)^s\lesssim_r  (1+|n-k|)^{|s|}(1+|k|)^s$, and from these inequalities, we deduce that 
 $$
 \Norm{ Ax}{s-r}^2 \lesssim_{r,s} \Norm{A}{0,N,r}^2\sum_n \Big(\sum_k\frac{   |\tilde x_k|}{(1+|n-k|)^{N-|s|-|r|}}\Big)^2. 
 $$
 
 Now by using Young inequality  for convolutions (see Lemma \ref{lemyoung} in appendix), we have for $N-|s|-|r|\geq d+1$
$$\sum_n \Big(\sum_k\frac{   |\tilde x_k|}{(1+|n-k|)^{N-|s|-|r|}}\Big)^2=\Big\| \Big(\frac{1}{(1+|k|)^{N-|s|-|r|}}\Big)_{k\in\Z^d}*\tilde x  \Big\|_{\ell^2}^2\lesssim \Norm{ \tilde x}{\ell^2}^2$$
as soon as $N-|s|-|r|\geq d+1$. For the smallest choice of $N$, we thus have 
\begin{align*}
\Norm{Ax}{s-r}^2&\leq C \Norm{A}{0,N,r}^2\Norm{ x}{s}^2
\end{align*}
for some constant $C$ depending only on $d$, $r$ and $s$. 
\endproof

\subsection{Product and commutator}
We generalize the matrix product as follows: let $A,B: \ \Z^d\times\Z^d\mapsto \C$ we define, when the series converge, the product $AB$ by the formula
$$(AB)(m,n)=\sum_{k\in\Z^d}A(m,k)B(k,n)=\sum_{j=1}^d\sum_{k_j\in\Z}A(m,\sum_{i=1}^dk_i e_i)B(\sum_{i=1}^dk_i e_i,n).$$
We denote by $[A,B]:=AB-BA$ the commutator of $A$ and $B$.

We now introduce a convenient notion of interval of multi-indices: for $\alpha,\beta\in\Z^d$ we say that $\beta\in[0,\alpha]$ if for all $j=1,\cdots,d$, $0\leq \beta_j\leq \alpha_j$ or $\alpha_j\leq\beta_j\leq0$ and we say that $\beta\in[0,\alpha)$ if for all $j=1,\cdots,d$, $0\leq \beta_j\leq \alpha_j-1$ or $\alpha_j+1\leq\beta_j\leq0$.
The main result of this section is
\begin{proposition}\label{Prop-Struct} Let $r_1,r_2\in\R$.
\begin{itemize} 
\item[(i)] The matrix product is a continuous map from $\Ac_{r_1}\times\Ac_{r_2}\to \Ac_{r_1+r_2}$. More precisely, for all $\alpha\in\Z^d$  and all $N\in\N$ there exists $C(\alpha,N,r_1,r_2)>0$ such that
\begin{equation}
\label{estprod}\Norm{ AB}{\alpha,N,r_1+r_2}\leq C(\alpha,N,r_1,r_2)\Big(\sum_{\beta\in [0,\alpha]} \Norm{ A} {\beta,N+1+|r_2|,r_1}\Big)\Big(\sum_{\beta\in [0,\alpha]} \Norm{ B}{\beta,N+1+|r_1|,r_2}\Big).
\end{equation}
\item[(ii)] The commutator gains one order : the map $\Ac_{r_1}\times\Ac_{r_2}\in(A,B)\mapsto [A,B]\in \Ac_{r_1+r_2-1}$ is continuous. More precisely, for all $\alpha\in\Z^d$  and all $N\in\N$ there exists $C(\alpha,N,r_1,r_2)>0$ such that
\begin{equation}\label{-1}\Norm{ [A,B]}{\alpha,N,r_1+r_2-1}\leq C(\alpha,N,r_1,r_2)\Big(\sum_{\substack{\beta,\gamma\in [0,\alpha]\\ M_1,M_2\leq 2N+|r_1|+|r_2|}} \Norm{ A}{\beta,M_1,r_1} \Norm{ B}{\gamma,M_2,r_2}\Big).\end{equation}
\end{itemize}
\end{proposition}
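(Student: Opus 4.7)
My plan is to prove (i) by a discrete Leibniz rule plus a convolution estimate, to reduce (ii) to the base case $\alpha=0$ via a commutator Leibniz rule, and to handle that base case by a discrete Taylor expansion in which the leading term cancels by commutativity of pointwise multiplication. A direct computation yields $(AB)^{j,\pm}=A^{j,\pm}B^{j,\pm}$, hence the discrete Leibniz identity $\Delta_j^+(AB) = (\Delta_j^+ A)\, B^{j,+} + A\,(\Delta_j^+ B)$ (and its $\Delta_j^-$ analogue). Iterating over coordinates expresses $\Delta^\alpha(AB)$ as a finite linear combination of products $(\Delta^\beta A)^{\mathrm{sh}}(\Delta^{\alpha-\beta}B)^{\mathrm{sh}}$ with $\beta\in[0,\alpha]$ and shifts bounded by $|\alpha|$; since the Chodosh semi-norms are translation invariant up to bounded factors, these shifts are harmless. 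For a single product $A'B'$ one bounds $|(A'B')(m,n)|\le\sum_k |A'(m,k)|\,|B'(k,n)|$, applies the defining estimates on $A'\in\Ac_{r_1'}$, $B'\in\Ac_{r_2'}$, and uses the two Peetre-type inequalities
\[
(1+|m|+|k|)^{r_1'}\lesssim(1+|m|+|n|)^{r_1'}(1+|n-k|)^{|r_1'|},\quad (1+|k|+|n|)^{r_2'}\lesssim(1+|m|+|n|)^{r_2'}(1+|m-k|)^{|r_2'|},
\]
proved exactly as in Lemma \ref{op}. The remaining sum $\sum_k(1+|m-k|)^{-(N_1-|r_2'|)}(1+|n-k|)^{-(N_2-|r_1'|)}$ is handled by splitting according to whether $|m-k|\ge|m-n|/2$ or $|n-k|\ge|m-n|/2$ (one always holds) and concluding via a Young-type bound, producing the target factor $(1+|m-n|)^{-N}$ and the estimate \eqref{estprod} of (i).

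\textbf{Commutator Leibniz rule and reduction to $\alpha=0$.} The identity
\[
\Delta_j^+[A,B] = [\Delta_j^+ A,\,B^{j,+}] + [A,\,\Delta_j^+ B]
\]
follows by direct computation, the naive cross term $(\Delta_j^+ A)(\Delta_j^+ B)-(\Delta_j^+ B)(\Delta_j^+ A)$ cancelling after rearrangement. Iterating yields $\Delta^\alpha[A,B]$ as a finite sum of commutators $[\Delta^\beta A,\,\Delta^{\alpha-\beta}B]$ (with shifts) indexed by $\beta\in[0,\alpha]$. Granted the base case $\alpha=0$ of (ii), each such commutator is of order $(r_1-|\beta|)+(r_2-|\alpha-\beta|)-1 = r_1+r_2-|\alpha|-1$, which is exactly the order required for $\Delta^\alpha[A,B]$, and the semi-norm bound \eqref{-1} then follows from the base case applied to every summand.

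\textbf{Base case $\alpha=0$.} Set $\tilde A(q,n) := A(n+q,n)$; substituting $k=n+q$ yields
\[
(AB)(m,n) = \sum_q \tilde A(m-n-q,\,n+q)\,\tilde B(q,n),\qquad (BA)(m,n) = \sum_q \tilde B(m-n-q,\,n+q)\,\tilde A(q,n).
\]
A coordinate-wise telescoping produces $\tilde A(p,n+q) = \tilde A(p,n) + R_A(p,n,q)$, where $R_A$ is a sum of $|q|$ terms each containing exactly one discrete difference $\Delta_i^\pm A$; this is the multi-dimensional version of the identity $\tilde A(p,n+j)-\tilde A(p,n)=\sum_{s=0}^{j-1}(\Delta_1^+ A)(p+n+s,n+s)$ in dimension $1$. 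The two leading contributions $\sum_q \tilde A(m-n-q,n)\,\tilde B(q,n)$ and $\sum_q \tilde B(m-n-q,n)\,\tilde A(q,n)$ are discrete convolutions in the first argument at fixed $n$, and are \emph{equal} after the change of variables $q\mapsto(m-n)-q$: this is the matrix-level incarnation of the fact that the leading symbols of $AB$ and $BA$ coincide because pointwise multiplication is commutative. Hence $[A,B]$ reduces to terms containing at least one factor $\Delta_i^\pm A$ or $\Delta_i^\pm B$, of respective orders $r_1-1$ and $r_2-1$; applying (i) to each such product, and absorbing the telescoping factor $|q|$ against the rapid decay $(1+|q|)^{-N}$ of $\tilde B$ (resp.\ $\tilde A$), yields the desired bound $(1+|m|+|n|)^{r_1+r_2-1}(1+|m-n|)^{-N}$.

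\textbf{Main obstacle.} The cancellation that produces the gain of one order is conceptually transparent; the real work is bookkeeping, especially in dimension $d\ge 2$. The telescoping must be carried out coordinate by coordinate, and the shifts produced at each stage tracked through the product estimate so that the decay $(1+|m-n-q|)^{-N}$ of $\Delta^\pm A$ combines correctly with the decay $(1+|q|)^{-N}$ of $\tilde B$ and with the extra factor $|q|$ coming from the telescoping to restore the full rapid decay $(1+|m-n|)^{-N}$ after summation over $q$. This careful accounting is what produces the precise semi-norm indices $M_1,M_2\le 2N+|r_1|+|r_2|$ appearing in \eqref{-1}.
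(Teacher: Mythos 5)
Your proposal is correct and follows essentially the same route as the paper: part (i) via the discrete Leibniz identity $\Delta_j^{\pm}(AB)=(\Delta_j^{\pm}A)\,B^{j,\pm}+A\,\Delta_j^{\pm}B$ together with Peetre-type inequalities and a convolution bound (this is Lemma \ref{alg}(i)--(ii) plus Lemma \ref{prod}), and part (ii) by reducing to $\alpha=0$ through a commutator Leibniz rule (your identity $[\Delta_j^{+}A,B^{j,+}]+[A,\Delta_j^{+}B]$ is just a regrouping of the three-term identity of Lemma \ref{alg}(iii)) and then telescoping entries with equal off-diagonal offset into first differences, absorbing the path length into the rapid decay. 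The paper's Lemma \ref{com} realizes your ``leading terms cancel'' step by pairing the two sums term by term and writing each paired difference as (difference of $A$'s)$\cdot B$ plus $A\cdot$(difference of $B$'s), which is exactly the cancellation you exhibit after the change of variables $q\mapsto m-n-q$, so the two write-ups differ only in bookkeeping.
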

This result could be deduced from the original works of Chodosh \cite{Cho1, Cho2} by using the correspondence between $\Ac_r$ and the class of pseudo-differential operators of order $r$ on $\T^d$. Here we give a direct proof which gives the specified estimates  \eqref{estprod} and \eqref{-1} and  will have the advantage to be directly carried over to numerical discretizations by finite difference, spectral or pseudo-spectral methods (see next Sections). 
%

The direct proof of  Proposition \ref{Prop-Struct} requires some technical lemmas, the reader who is not interested in the technical aspects concerning the semi-norms introduced in Definition \ref{cho} can go directly to the section \ref{diffOp}.\\
We begin with some algebraic calculus:
\begin{lemma}\label{alg}
Let $A,B$ two infinite matrices $A,B: \ \Z^d\times\Z^d\mapsto \C$ and $j=1,\cdots,d$
\begin{itemize}
\item[(i)] $\Delta_j^+(AB)=\Delta_j^+ A\ B^{j,+}+A\ \Delta_j^+ B$,
\item[(ii)] $\Delta^{-}_j(AB)=\Delta^{-}_j A\ B^{j,-}+A\ \Delta^{-}_j B$,
\item[(iii)] $\Delta_j^+ [A,B]=[\Delta_j^+ A,B]+[A,\Delta_j^+  B]+[\Delta_j^+ A,\Delta_j^+ B]$,
\item[(iv)] $\Delta^{-}_j [A,B]=[\Delta^{-}_jA,B]+[A,\Delta^{-}_j  B]+[\Delta^{-}_j A,\Delta^{-}_j B]$.
\end{itemize}

\end{lemma}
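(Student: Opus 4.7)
The statement is a collection of purely algebraic identities for the finite difference operators $\Delta_j^\pm$ acting on matrix products and commutators, so the strategy is just careful bookkeeping; no analytic estimates are needed.

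The key preliminary observation is that the shift $A \mapsto A^{j,\pm}$ is a homomorphism for the matrix product, i.e.\ $(AB)^{j,+} = A^{j,+} B^{j,+}$ and similarly for $j,-$. This is verified by a one-line change of summation index: for instance
\[
(AB)^{j,+}(m,n) \;=\; \sum_{k \in \Z^d} A(m+e_j, k) B(k, n+e_j) \;=\; \sum_{k \in \Z^d} A(m+e_j, k+e_j) B(k+e_j, n+e_j)
\]
after shifting $k \mapsto k + e_j$, which is exactly $(A^{j,+} B^{j,+})(m,n)$. (To make sense of the manipulation without convergence issues, I would state the lemma under the standing assumption that all products that appear converge entrywise, which is the assumption already made before the lemma.)

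From this, (i) is just the standard Leibniz trick:
\[
\Delta_j^+(AB) \;=\; A^{j,+}B^{j,+} - AB \;=\; (A^{j,+}-A)\,B^{j,+} + A\,(B^{j,+}-B) \;=\; \Delta_j^+A\cdot B^{j,+} + A \cdot \Delta_j^+ B,
\]
and (ii) follows identically with $-e_j$ in place of $+e_j$.

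For (iii), I would apply (i) to both $AB$ and $BA$, obtaining
\[
\Delta_j^+[A,B] \;=\; \Delta_j^+ A \cdot B^{j,+} + A \cdot \Delta_j^+ B - \Delta_j^+ B \cdot A^{j,+} - B \cdot \Delta_j^+ A,
\]
and then reorganize using the two elementary substitutions $B^{j,+} = B + \Delta_j^+ B$ and $A^{j,+} = A + \Delta_j^+ A$. Expanding each of the four terms and regrouping naturally gathers the commutators $[\Delta_j^+A,B]$, $[A,\Delta_j^+B]$, and $[\Delta_j^+ A,\Delta_j^+ B]$, which is precisely the claimed identity. Identity (iv) is obtained by the same manipulation with $-e_j$.

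The only ``obstacle'' is essentially notational: one must be careful about which factor carries the shift and track the signs during the expansion in (iii)/(iv). There is nothing deep going on; the lemma simply records the discrete analogue of the Leibniz rule and its iteration on a commutator, which is what makes the inductive estimates of Proposition \ref{Prop-Struct} possible.
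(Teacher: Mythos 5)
Your proposal is correct and follows essentially the same route as the paper: the identity (i) comes from the index shift $k\mapsto k+e_j$ (which is exactly your observation that $(AB)^{j,+}=A^{j,+}B^{j,+}$) together with the add-and-subtract Leibniz step, and (iii)–(iv) follow by applying (i)–(ii) to $AB$ and $BA$ and substituting $A^{j,+}=A+\Delta_j^+A$, $B^{j,+}=B+\Delta_j^+B$, just as in the paper's computation. The only difference is presentational: you isolate the shift-homomorphism property as a preliminary remark, while the paper performs the reindexing inline.
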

\proof It is just a calculus, for instance for (i):
\begin{align*}\Delta_j^+(AB)(m,n)&=\sum_k A(m+e_j,k)B(k,n+e_j)-\sum_k A(m,k)B(k,n)\\
&=\sum_k A(m+e_j,k+e_j)B(k+e_j,n+e_j)-\sum_k A(m,k)B(k,n)\\
&=\sum_k \Delta_j^+ A(m,k) B^{j,+}(k,n)+ \sum_k A(m,k)\Big(B(k+e_j,n+e_j) - B(k,n)\Big)\\
&=(\Delta_j^+ A\ B^{j,+})(m,n)+(A\ \Delta_j^+ B)(m,n).
\end{align*}

And for (iii):
\begin{align*}
\Delta_j^+ [A,B] &= \Delta_j^+ (AB) - \Delta_j^+ (BA) \\
&= \Delta_j^+ A\ B^{j,+}+A\ \Delta_j^+ B - \Delta_j^+ B\ A^{j,+}-B\ \Delta_j^+ A\\
&= \Delta_j^+ A\ \Delta_j^+ B  + \Delta_j^+ A \ B +A\ \Delta_j^+ B - \Delta_j^+ B\ \Delta_j^+ A -\Delta_j^+ B\  A  -B\ \Delta_j^+ A. 
\end{align*}
\endproof

\begin{lemma}\label{prod}
Let $A\in\Ac_{r_1}$ and $B\in \Ac_{r_2}$ for $r_1$ and $r_2$ in $\R$. Then
$$\Norm{AB}{0,N,r_1+r_2}\lesssim_{N,r_1,r_2} \Norm{A}{0,N+d+|r_2|,r_1}\Norm{B}{0,N+d+|r_1|,r_2}.$$
\end{lemma}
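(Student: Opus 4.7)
The plan is to bound $(AB)(m,n)=\sum_{k\in\Z^d}A(m,k)B(k,n)$ directly from the pointwise estimates supplied by the semi-norms, with auxiliary exponents $N_1=N+d+|r_2|$ and $N_2=N+d+|r_1|$ fixed at the outset. From the definition one has $|A(m,k)|\le \Norm{A}{0,N_1,r_1}(1+|m|+|k|)^{r_1}(1+|m-k|)^{-N_1}$ and analogously for $B$. Two manipulations then do the work: (i) replace the intermediate weights $(1+|m|+|k|)^{r_1}$ and $(1+|k|+|n|)^{r_2}$ by the target weight $(1+|m|+|n|)^{r_1+r_2}$, and (ii) extract the factor $(1+|m-n|)^{-N}$ from the decay factors before summing over $k$.

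For step (i) I would recycle the Peetre-type inequalities already established in the proof of Lemma \ref{op}:
$$(1+|m|+|k|)^{r_1}\lesssim_{r_1}(1+|m|+|n|)^{r_1}(1+|k-n|)^{|r_1|},\quad (1+|k|+|n|)^{r_2}\lesssim_{r_2}(1+|m|+|n|)^{r_2}(1+|m-k|)^{|r_2|},$$
both valid regardless of the sign of the $r_j$. These introduce spurious polynomial gains $|r_1|$ and $|r_2|$ in $|k-n|$ and $|m-k|$, which is exactly why the semi-norms on the right-hand side of the lemma carry the shift $|r_j|$.

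For step (ii) I would invoke the submultiplicative bound $1+|m-n|\le (1+|m-k|)(1+|k-n|)$, raised to the $N$-th power, which spends $N$ units of decay from each factor to pull out $(1+|m-n|)^{-N}$. With the chosen $N_1,N_2$ the residual sum reduces to $\sum_k(1+|m-k|)^{-d}(1+|k-n|)^{-d}$, which I would show to be uniformly bounded in $m,n$ by splitting according to whether $|m-k|\le |k-n|$: on the first region the integrand is dominated by $(1+|m-k|)^{-2d}$, on the second by $(1+|k-n|)^{-2d}$, each summable over $\Z^d$ since $2d>d$. Combining the three estimates yields the bound $(1+|m-n|)^N(1+|m|+|n|)^{-(r_1+r_2)}|(AB)(m,n)|\lesssim\Norm{A}{0,N_1,r_1}\Norm{B}{0,N_2,r_2}$, i.e. the claimed inequality.

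The main obstacle is essentially book-keeping: balancing exponents so that the Peetre losses $|r_j|$ are absorbed, so that $N$ powers can still be extracted to recover $(1+|m-n|)^{-N}$, and so that enough decay remains to guarantee uniform summability of the residual kernel. The symmetric splitting used on the residual sum is a slightly sharper alternative to a direct application of the discrete Young inequality (Lemma \ref{lemyoung}) of the style used in the proof of Lemma \ref{op}, and is what makes the threshold $N+d+|r_j|$ stated in the lemma tight — rather than $N+d+|r_j|+1$, which a naive $\ell^1$-$\ell^\infty$ Young estimate would force.
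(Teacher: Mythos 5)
Your proposal is correct and follows essentially the same route as the paper: bound $|(AB)(m,n)|$ termwise by the $\alpha=0$ semi-norms, use the two Peetre-type inequalities to trade the weights $(1+|m|+|k|)^{r_1}$ and $(1+|k|+|n|)^{r_2}$ for $(1+|m|+|n|)^{r_1+r_2}$ at the cost of $(1+|k-n|)^{|r_1|}$ and $(1+|m-k|)^{|r_2|}$ (absorbed by the shifted exponents $N+d+|r_2|$, $N+d+|r_1|$), then extract $(1+|m-n|)^{-N}$ via $(1+|m-k|)(1+|k-n|)\geq(1+|m-n|)$ and show the residual kernel $\sum_k(1+|m-k|)^{-d}(1+|k-n|)^{-d}$ is uniformly bounded. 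The only (harmless) deviation is your symmetric splitting $|m-k|\leq|k-n|$ versus the paper's splitting of the re-centered sum according to $|n-m-k|\leq|k|/2$; both yield the same uniform bound.
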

\proof
By definition we have for all $m,n\in\Z^d$
\begin{multline*}|(AB)(m,n)|\\ \leq \Norm{A}{0,N+d+|r_2|,r_1}\Norm{B}{0,N+d+|r_1|,r_2}\sum_k \frac{(1+|m|+|k|)^{r_1}(1+|n|+|k|)^{r_2}}{(1+|m-k|)^{N+|r_2|+d}(1+|n-k|)^{N+d+|r_1|}}.
\end{multline*}
So it suffices to prove that
\begin{equation}\label{oui}
\sum_k \frac{(1+|m|+|k|)^{r_1}(1+|n|+|k|)^{r_2}}{(1+|m-k|)^{N+|r_2|+d}(1+|n-k|)^{N+d+|r_1|}}\lesssim_{N,r_1,r_2}
\frac{(1+|m|+|n|)^{r_1+r_2}}{(1+|m-n|)^{N}}.
\end{equation}
To begin with, we deal with the denominators and the sum. Using that $(1+|m-k|)(1+|n-k|)\geq(1+|m-n|)$, we note that 
$$
\sum_{k\in\Z^d}\frac1{(1+|m-k|)^{N+d}(1+|n-k|)^{N+d}} \leq  (1+|m-n|)^{-N} 
\sum_{k\in\Z^d}\frac1{(1+|m-k|)^{d}(1+|n-k|)^{d}}. 
$$
But we have 
$$
\sum_{k\in\Z^d}\frac1{(1+|m-k|)^{d}(1+|n-k|)^{d}}  = \sum_{k\in\Z^d}\frac1{(1+|k|)^{d}(1+|n - m -k|)^{d}}
$$
We decompose the last sum into two parts according to  $|n-m - k| \leq \frac{|k|}{2}$ or not. As $( 1 + |k|)( 1 + |m - n -k |) \geq (1 + |m-n|)$ we obtain the bound
$$
\sum_{|n-m - k| \leq \frac{|k|}{2}}\frac1{(1+|m-n|)^{d}}+ \sum_{|n-m - k| > \frac{|k|}{2}}\frac{2^d}{(1+|k|)^{2d}}\lesssim_d 1
$$
for all $n$ and $m$, 
as in the first sum, we have $|k| \leq 2 |n-m|$ and thus the number of term is $\mathcal{O}(|n-m|^d)$ up to constants depending on $d$. 
We thus see that to prove \eqref{oui}, it remains to prove that for all $k\in\Z^d$
$$(1+|m|+|k|)^{r_1}\lesssim_{r_1}(1+|m|+|n|)^{r_1}(1+|n-k|)^{|r_1|}$$
and similarly
$$(1+|n|+|k|)^{r_2}\lesssim_{r_2}(1+|m|+|n|)^{r_2}(1+|m-k|)^{|r_2|}.$$
Let us prove the first one. We consider two cases
\begin{itemize}
\item either $r_1\geq0$ then   we use that, either $|k|\leq2|n|$ or $|n-k|\geq|k|/2$ which leads to in both cases to $(1+|m|+|k|)\leq 2(1+|m|+|n|)(1+|n-k|)$;
\item either $r_1\leq 0$ then we use that $(1+|m|+|n|)\leq (1+|m|+|k|)(1+|n-k|)$ as in the proof of Lemma \ref{op}. 
\end{itemize} 
\endproof
\begin{lemma}\label{com}
Let $A\in\Ac_{r_1}$ and $B\in \Ac_{r_2}$ then
\begin{equation}\label{comu}\Norm{[A,B]}{0,K,r_1+r_2-1}\lesssim_{K,r_1,r_2}\sum_{\substack{|\alpha| + |\beta| = 1\\ N,M\leq 2(K+|r_1|+|r_2|+d+1)}} \Norm{A}{\alpha,N,r_1}\Norm{B}{\beta,M,r_2}.\end{equation}
\end{lemma}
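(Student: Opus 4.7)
My plan is to reduce the problem to Lemma \ref{prod} by writing $[A,B]$ as a finite sum of ``product-like'' kernels in which one of the two factors carries a single $\Delta_j^\pm$---i.e.\ belongs to a class of order one less---so that the gain of one order becomes structural. The key algebraic identity comes from a symbol representation. Introducing $a(m,\xi):=A(m,m+\xi)$, $b(m,\xi):=B(m,m+\xi)$, and $\eta:=n-m$, the matrix products take the form
$$(AB)(m,n)=\sum_\xi a(m,\xi)\,b(m+\xi,\eta-\xi),\qquad (BA)(m,n)=\sum_\xi b(m,\xi)\,a(m+\xi,\eta-\xi).$$
Applying the substitution $\xi\mapsto\eta-\xi$ in the second sum, then adding and subtracting $a(m,\xi)\,b(m,\eta-\xi)$, I obtain the pivotal identity
$$[A,B](m,n)=\sum_\xi a(m,\xi)\bigl[b(m+\xi,\eta-\xi)-b(m,\eta-\xi)\bigr]+\sum_\xi b(m,\eta-\xi)\bigl[a(m,\xi)-a(m+\eta-\xi,\xi)\bigr],$$
in which the commutator cancellation is concentrated in two first-argument differences of the symbols.

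Next I telescope each difference. Since $a(m+e_j,\xi)-a(m,\xi)=\Delta_j^+A(m,m+\xi)$, and analogously with $\Delta_j^-$ in the opposite direction, I can expand
$$b(m+\xi,\eta-\xi)-b(m,\eta-\xi)=\sum_{j=1}^d\sum_{i=0}^{|\xi_j|-1}\mathrm{sgn}(\xi_j)\,\Delta_j^{\mathrm{sgn}(\xi_j)}B\bigl(m^{(j,i)},\,m^{(j,i)}+\eta-\xi\bigr)$$
along a monotone lattice path with intermediate positions $m^{(j,i)}$ between $m$ and $m+\xi$, and symmetrically for the $a$-difference along a path from $m$ to $m+\eta-\xi$. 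Plugging these expansions back in rewrites $[A,B](m,n)$ as a finite sum, over $j$, signs, and path positions, of kernels of the shape $A\cdot\Delta_j^\pm B$ or $\Delta_j^\pm A\cdot B$---precisely the configuration $|\alpha|+|\beta|=1$ appearing in the statement.

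The last step is a kernel estimate parallel to the proof of Lemma \ref{prod}. Using the Chodosh seminorm bounds on each factor, the inequality
$$(1+|m|+|m+\xi|)^{r_1}\,(1+|k|+|k+\eta-\xi|)^{r_2-1}\lesssim_{r_1,r_2}(1+|m|+|n|)^{r_1+r_2-1}(1+|\xi|)^{|r_1|}(1+|\eta-\xi|)^{|r_2-1|}$$
(obtained as in the proof of Lemma \ref{op} by splitting according to the sign of the exponent), together with $(1+|\xi|)(1+|\eta-\xi|)\gtrsim(1+|m-n|)$, allow the combinatorial path-length factor ($|\xi|$ or $|\eta-\xi|$) to be absorbed by a single extra power of $(1+|\xi|)^{-1}$ or $(1+|\eta-\xi|)^{-1}$; the residual $\xi$-summation then closes by the Young convolution argument used after \eqref{oui}. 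Allowing $N,M$ up to $2(K+|r_1|+|r_2|+d+1)$, as in the statement, absorbs all losses with room to spare.

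The principal difficulty, rather than a conceptual obstacle, is the clean bookkeeping of the multidimensional telescoping: selecting a lattice path, tracking the signs $\mathrm{sgn}(\xi_j)$ to decide whether $\Delta_j^+$ or $\Delta_j^-$ appears at each step, and verifying that the cumulative path-length factor does not saturate the $(1+|m-n|)^{-K}$ decay. Conceptually, the identity of the first paragraph is the discrete avatar of the classical fact that the Poisson bracket of two symbols governs the leading commutator of their quantizations, which is exactly the source of the ``miraculous'' gain of one order.
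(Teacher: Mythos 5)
Your proposal is correct and follows essentially the same route as the paper's proof: the same index-shifted pairing of $AB$ and $BA$, the same add-and-subtract of a cross term so that each piece carries a first-argument (diagonal) difference, the same telescoping of that difference into single $\Delta_j^{\pm}$ factors (the $|\alpha|+|\beta|=1$ structure), and the same weight/decay kernel estimates in which the path-length factor and the $\xi$-summation are absorbed by the generous choice of $N,M$. The symbol notation $a(m,\xi)=A(m,m+\xi)$ is only a cosmetic repackaging of the computation the paper performs directly on the matrix entries.
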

\proof
Let $m,n\in\Z^d$, we have
\begin{align*}[A,B](m,n)&=\sum_{k\in\Z^d}A(m,k)B(k,n)-B(m,k)A(k,n)\\
&=\sum_{\ell\in\Z^d}A(m,m+\ell)B(m+\ell,n)-B(m,n-\ell)A(n-\ell,n).
\end{align*}
Furthermore by using telescopic summation, we have 
$$|B(m,n-\ell)-B(m+\ell,n)|\leq \sum_{\substack{j\in[0,\ell) \\\alpha_i = \mathrm{sign}(\ell_i)}} (|\Delta_1^{\alpha_1} B|+\cdots+|\Delta_d^{\alpha_d} B|)(m+j,n+j-\ell) $$
and similarly
$$|A(n-\ell,n)-A(m,m+\ell)|\leq \sum_{\substack{j\in[0,m-n+\ell) \\\alpha_i = \mathrm{sign}( m - n + \ell)}}(|\Delta_1^{\alpha_1} A|+\cdots+|\Delta_d^{\alpha_d} A|)(n-\ell+j,n+j).$$
Therefore we get
\begin{equation}\label{AB}
|[A,B](m,n)|\leq \sum_{\ell\in\Z^d}\Sigma_{1,\ell}+\Sigma_{2,\ell}\end{equation}
where
$$\Sigma_{1,\ell}=|A(n-\ell,n)|\sum_{\substack{j\in[0,\ell) \\\alpha_i = \mathrm{sign}(\ell_i)}}(|\Delta_1^{\alpha_1} B|+\cdots+|\Delta_d^{\alpha_d} B|)(m+j,n+j-\ell) $$
and
$$\Sigma_{2,\ell}= |B(m+\ell,n)|\sum_{\substack{j\in[0,m-n+\ell) \\\alpha_i = \mathrm{sign}( m - n + \ell)}}(|\Delta_1^{\alpha_1} A|+\cdots+|\Delta_d^{\alpha_d} A|)(n-\ell+j,n+j).$$
So it remains to estimate $\Sigma_{1,\ell}$ and $\Sigma_{2,\ell}$. We begin with $\Sigma_{1,\ell}$:
$$
\Sigma_{1,\ell}\leq d\sum_{|\beta| = 1}\Norm{A}{0,2N,r_1}\Norm{B}{\beta,2N,r_2}\sum_{j\in[0,\ell)}\frac{(1+|m+j|+|n+j-\ell|)^{r_2-1}(1+|n-\ell|+|n|)^{r_1}}{(1+|m-n+\ell|)^{2N}(1+|\ell|)^{2N}}.
$$
We now want to choose $N$ such  that
\begin{equation}
\label{si}\sum_{j\in[0,\ell)}\frac{(1+|m+j|+|n+j-\ell|)^{r_2-1}(1+|n-\ell|+|n|)^{r_1}}{(1+|m-n+\ell|)^{2N}(1+|\ell|)^{2N}}\lesssim_{K,r_1,r_2} \frac{(1+|m|+|n|)^{r_1+r_2-1}}{(1+|m-n|)^K(1+|\ell|)^{d+1}}
\end{equation}
in such a way that
\begin{equation}
\label{si1}\sum_{\ell\in\Z^d}\Sigma_{1,\ell}\lesssim_{K,r_1,r_2,d}\sum_{|\beta| = 1}\Norm{A}{0,2N,r_1}\Norm{B}{\beta,2N,r_2} \frac{(1+|m|+|n|)^{r_1+r_2-1}}{(1+|m-n|)^K}.
\end{equation}
by summing in $\ell$ the series $\sum_{\ell} \frac{1}{(1+|\ell|)^{d+1}} < +\infty$.
To prove \eqref{si}, we  
first note that
$$(1+|m-n+\ell|)^2(1+|\ell|)^2\geq (1+|m-n|)(1+|\ell|)$$
and thus
\begin{align*}
\sum_{j\in[0,\ell)}&\frac{(1+|m+j|+|n+j-\ell|)^{r_2-1}(1+|n-\ell|+|n|)^{r_1}}{(1+|m-n+\ell|)^{2N}(1+|\ell|)^{2N}}\\ &\leq \sum_{j\in[0,\ell)}\frac{(1+|m+j|+|n+j-\ell|)^{r_2-1}(1+|n-\ell|+|n|)^{r_1}}{(1+|m-n|)^{N}(1+|\ell|)^{N}}.
\end{align*}
On the other hand we have for $j\in[0,\ell)$
$$
(1+|m+j|+|n+j-\ell|) \leq (1 + |m| + |n| + |j| + |j-\ell|) \leq 2 (1 + |m| + |n|)(1 + |\ell|)
$$
and 
\begin{align*}
(1+|m|+|n|) &\leq 1 + |m+j| + |n+j-\ell| + |j| + |j-\ell| \\
&\leq 2(1 + |m+j| + |n+j-\ell|) (1 + |\ell|).
\end{align*}
This shows that 
$$\frac{(1+|m|+|n|)}{2(1+|m-n|)(1+|\ell|)}\leq (1+|m+j|+|n+j-\ell|)\leq 2(1+|m|+|n|)(1+|\ell|)$$
and similarly
$$ \frac{(1+|m|+|n|)}{2(1+|m-n|)(1 + |\ell|)}\leq(1+|n-\ell|+|n|)\leq 2(1+|m|+|n|)(1+|\ell|).$$
By using these inequalities according to the sign of $r_2 -1$ and $r_1$, 
this leads to \eqref{si} with $N=K+|r_1|+|r_2|+d+1$. The estimate 
 \begin{equation}\label{si2}\sum_{\ell\in\Z^d}\Sigma_{2,\ell}\lesssim_{K,r_1,r_2}\sum_{|\alpha| = 1}\Norm{A}{\alpha,2N,r_1}\Norm{B}{0,2N,r_2} \frac{(1+|m|+|n|)^{r_1+r_2-1}}{(1+|m-n|)^K}.\end{equation}
 is obtained in the same way for the same choice of $N$ and thus, combining \eqref{AB} with \eqref{si1} and \eqref{si2} we get \eqref{comu}.
\endproof

{\it Proof of Proposition \ref{Prop-Struct} }
Assertion {\em (i)}  is a consequence of Lemma \ref{prod} combined with Lemma \ref{alg}, assertions {\em (i)} and {\em (ii)} by noticing that $\Norm{B^{j,+}}{\alpha,N,r} \lesssim_{\alpha,N,r} \Norm{B}{\alpha,N,r}$ and a similar relation for $B^{j,-}$. 
Assertion {\em (ii)}  is a consequence of Lemma \ref{alg} assertions {\em (iii)} and {\em (iv)}, and Lemma \ref{com}. 
\endproof
\subsection{Representation of differential operators}\label{diffOp}
Let $\T^d = (\R / (2 \pi \Z))^d$ be the standard $d$-dimensional torus. 
A complex function $u: \T^d \to \C$ in $L^2(\T^d)$ is identified with its Fourier coefficients 
$$
\hat u(k) = \frac{1}{(2\pi)^d} \int_{\T^d} e^{- i k \cdot x} u(x) \dd x, \quad k \in \Z^d
$$
and we have the correspondence, for $s\geq0$,
$$
\hat u\equiv(\hat u(k))_{k \in \Z^d} \in h^s \qquad \Longleftrightarrow \quad \partial_x^\alpha u \in L^2(\T^d), \quad \alpha \in \N^d,\quad  |\alpha| \leq s
$$
where for a multiindex $\alpha = (\alpha_1,\ldots,\alpha_d) \in \N^d$, $\partial_x^\alpha = \partial_{x_1}^{\alpha_1}
\cdots \partial_{x_d}^{\alpha_d}$.

For any function $\Phi:\R^d\to \C$, we  define $\Phi(-i \partial_x)$ by the formula
\newcommand\reallywidehat[1]{\arraycolsep=0pt\relax%
\begin{array}{c}
\stretchto{
  \scaleto{
    \scalerel*[\widthof{\ensuremath{#1}}]{\kern-.5pt\bigwedge\kern-.5pt}
    {\rule[-\textheight/2]{1ex}{\textheight}} 
  }{\textheight} %
}{0.5ex}\\           
#1\\                 
\rule{-1ex}{0ex}
\end{array}
}
$$
\reallywidehat{(\Phi( -i \partial_x) u)} (k) := \Phi(k) \hat u(k) .
$$
We notice that $\Phi(-i \partial_x)$ is  a linear operator in $\ell^2$ and we denote by $A_\Phi$ the corresponding diagonal matrix with components 
\begin{equation}
\label{Aphi}
A_\Phi(m,n) = \Phi(m) \delta_{mn}. 
\end{equation}
For a given function $V: \T^d \to \C$, we associate the operator 
$$
u(x) \mapsto V(x) u(x) 
$$
which, in Fourier, corresponds to the convolution operator
$$
\widehat{(Vu)}(k) = \sum_{\ell \in \Z^d} \widehat V(k-\ell) \hat u(\ell).
$$
We associate to the function $V$ an infinite matrix $B_V$ with components
\begin{equation}
\label{BV}
B_V(m,n) = \widehat{V}(m-n), \quad m,n\in\Z^d
\end{equation}
in such a way that we have
$$\widehat{(Vu)}=B_V  \hat u.$$
\begin{lemma}
\label{phietv}
Let $\Phi: \R^d \to \C$ and $V: \T^d \to \C$ two functions and $A_\Phi$ and $B_V$ the matrices defined above. 
\begin{itemize}
\item[(i)]
If $\Phi$ is $\mathcal{C}^\infty$ and if there exists $r \in \R$ such that 
for all $\alpha \in \N^d$ and $x \in \R^d$,  \(|\partial_x^\alpha \Phi (x)| \lesssim_{r,\alpha} \langle x\rangle^{r - |\alpha|}\), then $A_\Phi \in \Ac_r$. 
\item[(ii)] If $V$ is $\mathcal{C}^\infty$ then $B_V \in \Ac_0$. 
\end{itemize}
\end{lemma}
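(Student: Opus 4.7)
My plan is to exploit two structural features that make the two cases essentially trivial, but for different reasons: $A_\Phi$ is diagonal, while $B_V$ is Toeplitz (i.e.\ $B_V(m,n)$ depends only on $m-n$). Both structures interact very conveniently with the finite differences $\Delta^\alpha$, and the proof then reduces to standard pointwise estimates.

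For part (i), I would first compute directly that $A_\Phi^{j,\pm}(m,n) = \Phi(m\pm e_j)\,\delta_{m,n}$, so $\Delta_j^\pm A_\Phi$ remains diagonal, and by iteration $\Delta^\alpha A_\Phi(m,n) = (\Delta^\alpha\Phi)(m)\,\delta_{m,n}$, where $\Delta^\alpha\Phi$ now denotes the corresponding iterated finite difference of the function $\Phi$ restricted to $\Z^d$. The diagonal support automatically kills the factor $(1+|m-n|)^{-N}$ (which equals $1$ on the diagonal), so it remains to prove the pointwise bound $|\Delta^\alpha \Phi(m)| \lesssim_\alpha \langle m\rangle^{r-|\alpha|}$. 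This follows from the standard integral representation of finite differences: iterating the identity $\Phi(m+e_j)-\Phi(m) = \int_0^1 \partial_j\Phi(m+t e_j)\,\dd t$ (and its backward analogue for $\Delta_j^-$), one obtains
\[
|\Delta^\alpha \Phi(m)| \;\leq\; C_\alpha \sup_{|\theta|\leq |\alpha|} \bigl|\partial_x^{\tilde\alpha}\Phi(m+\theta)\bigr|,
\]
with $\tilde\alpha := (|\alpha_1|,\ldots,|\alpha_d|)\in \N^d$ and $|\tilde\alpha|=|\alpha|$. Combining with the hypothesis $|\partial_x^{\tilde\alpha}\Phi(x)|\lesssim \langle x\rangle^{r-|\alpha|}$ and the elementary inequality $\langle m+\theta\rangle \sim_\alpha \langle m\rangle$ for $|\theta|\leq |\alpha|$ (a consequence of $\bigl||m+\theta|-|m|\bigr|\leq |\alpha|$) closes the argument.

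For part (ii), the Toeplitz structure trivializes all nonzero finite differences: since
\[
B_V^{j,\pm}(m,n) \;=\; \widehat V\bigl((m\pm e_j)-(n\pm e_j)\bigr) \;=\; \widehat V(m-n) \;=\; B_V(m,n),
\]
we have $\Delta_j^\pm B_V = 0$, hence $\Delta^\alpha B_V = 0$ for every $\alpha \neq 0$, and the estimates of Definition \ref{cho} are then vacuously satisfied. The only remaining case $\alpha = 0$ reduces to $|\widehat V(m-n)|\lesssim_N (1+|m-n|)^{-N}$ for every $N$, which is the standard Schwartz decay of Fourier coefficients of a $C^\infty$ function on $\T^d$, obtained by repeated integration by parts in the integral defining $\widehat V$. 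The only mildly delicate point in the whole proof is the integral representation of $\Delta^\alpha\Phi$ when the components of $\alpha$ have mixed signs, but this is nothing more than repeated application of the fundamental theorem of calculus with the signs absorbed into the definition of $\Delta_j^-$; once the diagonal/Toeplitz observations are made, no real obstacle remains.
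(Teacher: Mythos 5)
Your proof is correct and follows essentially the same route as the paper: exploit the diagonal structure of $A_\Phi$ to reduce to pointwise bounds on $\Delta^\alpha\Phi$ via an integral representation of the finite differences, and exploit the Toeplitz structure of $B_V$ so that all nonzero finite differences vanish and only the Schwartz decay of $\widehat V$ remains. The only cosmetic difference is that the paper bounds the iterated differences through a first-order Taylor expansion with integral remainder (picking up an $|\alpha|{+}1$-th derivative in the supremum, cf.\ \eqref{estDA}), whereas you use the fundamental theorem of calculus directly, which gives the marginally cleaner bound $\sup_{|\theta|\leq|\alpha|}|\partial_x^{\tilde\alpha}\Phi(m+\theta)|$; both are closed by the same symbol estimate and the equivalence $\langle m+\theta\rangle\sim_\alpha\langle m\rangle$.
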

\begin{proof}
The operator $A_\Phi$ is diagonal and we have to prove that 
$$
|(\Delta^\alpha A_\Phi)(m,m)| \leq C_{\alpha} (1 + |m|)^{r - |\alpha|}
$$
for all $\alpha \in \Z^d$. In such an expression, $\Delta_j^+$ and $\Delta_j^-$ are finite difference operators and acting only on $\Phi$. By using classically Taylor estimates, we have for instance 
$$
(\Delta_j^+ A_\Phi)(m,m) = \Phi(m + e_j) - \Phi(m) = \partial_{x_j}\Phi(m) + \int_0^1  ( 1 - t)  \partial_{x_j}^2 \Phi(m + t e_j) \dd t
$$
which yields, by iterating this formula, estimates of the form 
\begin{equation}
\label{estDA}
(\Delta^\alpha A_\Phi)(m,m) \lesssim_\alpha |\partial_x^{|\alpha|} \Phi(m)|+ \sup_{y - m \in  [-|\alpha|,|\alpha|]^d} |\partial_{x}^{|\alpha| + 1} \Phi(y)|, 
\end{equation}
showing {\em (i)} under the assumption on $\Phi$. 
\medskip 

\noindent To prove {\em (ii)}, we first notice that $\Delta_j^+ B_V = \Delta_j^- B_V = 0$ as $B_V(m,n)$ depends only on $m-n$. Hence we only need to prove that for all $N$, 
$$
\big|B_V(m,n)\big| = \big|\hat V(m-n)\big| \lesssim_N (1+|m-n|)^{-N},\quad \forall m,n\in\Z^d
$$
which holds true for $V \in \mathcal{C}^\infty (\T^d)$. 
\end{proof}
As a consequence of this result and Proposition \eqref{Prop-Struct}, we obtain: 
\begin{corollary}
\label{cor28}
Let $\Phi_i$, $i = 1, \ldots, P$ functions satisfying condition (i) of the previous Lemma, for orders $r_i \in \R$, and $V_i$ some smooth functions. Then 
\begin{equation}
\label{AAA}
A = \prod_{i = 1}^P A_{\Phi_i} B_{V_i} \in \Ac_{r}, \quad \mbox{with} \quad r = r_1 + \ldots + r_P. 
\end{equation}
\end{corollary}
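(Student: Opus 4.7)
The statement is essentially a direct consequence of the two results just established, so the proof is a short assembly. The plan is to combine Lemma \ref{phietv} (which places each $A_{\Phi_i}$ in $\Ac_{r_i}$ and each $B_{V_i}$ in $\Ac_0$) with the product structure from Proposition \ref{Prop-Struct}(i) (which states that $\Ac_{s_1}\cdot \Ac_{s_2}\subset \Ac_{s_1+s_2}$), and then proceed by induction on $P$.

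More precisely, I would first invoke Lemma \ref{phietv}(i) with the assumption on each $\Phi_i$ to conclude $A_{\Phi_i}\in \Ac_{r_i}$, and Lemma \ref{phietv}(ii) with $V_i\in \mathcal{C}^\infty(\T^d)$ to conclude $B_{V_i}\in \Ac_0$. Applying Proposition \ref{Prop-Struct}(i) to each factor gives $A_{\Phi_i}B_{V_i}\in \Ac_{r_i+0}=\Ac_{r_i}$, together with a quantitative bound on its semi-norms in terms of those of $A_{\Phi_i}$ and $B_{V_i}$.

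Next, setting $C_j:=\prod_{i=1}^{j} A_{\Phi_i}B_{V_i}$, I argue by induction on $j$: assuming $C_j\in \Ac_{r_1+\cdots+r_j}$, Proposition \ref{Prop-Struct}(i) applied to $C_j\cdot (A_{\Phi_{j+1}}B_{V_{j+1}})$ yields $C_{j+1}\in \Ac_{r_1+\cdots+r_{j+1}}$, with all relevant semi-norm estimates propagating through the bound \eqref{estprod}. Taking $j=P$ gives \eqref{AAA}.

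There is no genuine obstacle here: the graded algebra property of $\Ac$ stated in Proposition \ref{Prop-Struct} does all the work. The only mild point to keep in mind is that the semi-norm control in \eqref{estprod} requires, at each inductive step, bounds on sufficiently many difference semi-norms $\Norm{\cdot}{\beta,N,\cdot}$ of both factors; since the number of factors $P$ is finite and fixed, this is harmless and produces an overall constant depending on $P$, on the orders $r_i$, and on the semi-norm indices $(\alpha,N)$.
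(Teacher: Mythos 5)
Your proposal is correct and matches the paper's reasoning: the corollary is stated there as an immediate consequence of Lemma \ref{phietv} (giving $A_{\Phi_i}\in\Ac_{r_i}$, $B_{V_i}\in\Ac_0$) combined with the product property of Proposition \ref{Prop-Struct}(i), which is exactly your inductive assembly. The remark about semi-norm bookkeeping through \eqref{estprod} is accurate and harmless, as you say.
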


With this result in hand, we see that all the standard pseudo-differential operators leads, in the Fourier side, to matrices belonging to a class $\Ac_r$ for a well chosen $r$\footnote{Actually this should also be recover from \cite{Cho1}.}. For example: 
\begin{itemize}
\item Fourier multipliers defined as polynomials of $D := - i \nabla_x$ which is the multiplication by $k \in \Z^d$ in Fourier. This includes the standard Laplace operator and linear KdV operator for instance. 
\item Transport operators of order one, of the form $u \mapsto \mathrm{div} ( \rho(x) u)$ or $u \mapsto X(x)\cdot \nabla u$ for some smooth function $\rho$ or smooth vector field $X$, 
\item Order two operators of the form $u \mapsto \mathrm{div} (a(x) \nabla u)$ for some smooth function $a$, 
\item All the pseudo-differential arising in fluid mechanics, for example the water wave operator 
\begin{equation}
\label{ww}\Omega^2:=\frac{1}{\sqrt{\mu}}|D| \tanh( \sqrt{\mu} |D|) 
\end{equation}
where $\mu$ is a small parameter and $|D|$ the Fourier multiplier $(|k_1|,|k_2|)$ in 2D. This operator encodes the pseudo-differential Dirichlet-to-Neumann operator arising in water wave theory. Note that the operator $\Omega$ is of order $r = \frac{1}{2}$. 
\end{itemize}
We can also extend the definition of $\Ac_r$ to operators acting on vector fields with components in $h^s$: 
\begin{definition}
\label{defsys}
Let $p \geq 1$ be a given integer. Let $\boldsymbol{r} = (r_{ij})_{1 \leq i,j \leq p}$ be a matrix of integers. 
We say that $\boldsymbol{A} \in \mathcal{A}_{\boldsymbol{b}}$ if 
$\boldsymbol{A}= (A_{ij})_{1 \leq i,j  \leq p}$ is $p$ matrices of elements $A_{ij} \in \mathcal{A}_{r_{ij}}$ for all $i,j \in \{1,\ldots,p\}$. 
\end{definition}
We can then extend the notion of product and commutators and norms from the components $A_{ij}$ to the system operator $\boldsymbol{A}$. In particular, the component of the product $(\boldsymbol{A} \boldsymbol{B})_{i,j} = \sum_{k = 1}^p A_{ik} B_{jk}$ is of order $\max_{k} (r_{ik} + r_{kj})$ and similar formula for the commutators. 

 For example for any vector field $\boldsymbol{u} = (u_1,u_2,u_3): \T^d \to \R^3$ of zero average on $\T^d$, denoting by $\hat u_i(k)$, $i = 1,2,3$ the Fourier transform of its components, the Leray projection of this field onto the field of divergence free vector field is given by 
$$
P\boldsymbol{u} = \boldsymbol{u} - \nabla \Delta^{-1}( \nabla \cdot \boldsymbol{u})
$$
and can be written 
$$
(\widehat{P\boldsymbol{u}})_i(k) = \hat u_i(k) - \sum_{j = 1}^3 \frac{k_{i}k_j}{|k|^2} \hat u_j(k), \quad k = (k_1,k_2,k_3) \in \Z^d. 
$$
It can also be written 
$$
(\widehat{P\boldsymbol{u}})_i = \sum_{j = 1}^3 (\delta_{ij} - A_{\Phi_{ij}}) \widehat u_j, 
$$
with $\Phi_{ij}(0) = 0$, $\Phi_{ij}$ of class $\mathcal{C^\infty}$ and $\Phi_{ij}(x) = \frac{x_{i}x_j}{|x|^2}$, for $ |x| > \frac12$. Hence we see that $P$ can be identified with a $3\times3$ matrix of infinite dimensional matrices belonging to $\Ac_{\boldsymbol{0}}$ as all the components of the matrix operator have order $0$.  

More generally, all the system of differential operators can be expressed as elements of $\Ac_{\boldsymbol{r}}$ for some matrix of orders, for instance elliptic system of Agmon, Douglis and Nirenberg type \cite{ADN59} or hyperbolic systems such as system of conservation laws \cite{Bre00}. 

\subsection{Geometric conditions\label{geosec}}

In this section we  consider subspaces of $\Ac_r$ that are stable by bracket and define Lie algebras. In these case, the flow operator is well defined and belong to an infinite dimensional Lie group. We give in sections \ref{hermi} and \ref{sectionsymp} two examples but the reader can imagine many other situations. \\
First we show in section \ref{dir} how we can encode different type of boundary condition in the matrix algebra $\Ac$. 

\subsubsection{Dirichlet boundary condition}\label{dir}

On periodic functions, we can easily consider Dirichlet or Neumann boundary conditions by imposing some parity conditions. For example if $u: \T^d \to \C$ satisfies $u(-x) = -u(x)$, then $\hat u_k = \hat u_{-k}$ for $k \in \Z^d$, and $u$ vanishes on the boundary of $\T^d$ represented as $[0,2\pi]^d$, {\em i.e.} $u$ satisfies Dirichlet boundary conditions on this domain. 

In order to be able to consider operators preserving this boundary conditions, we define $\Ac^D$ the subclass of $\Ac$ defined by
$$A\in\Ac^D \iff A\in\Ac \quad\text{and}\quad A(-m,-n)=A(m,n),\ \forall m,n\in\Z^d.$$
We also define $h^s_{\rm odd}$ as the subspace of $h^s$ formed by the odd sequences:
$$x\in h^s_{\rm odd}\iff x\in h^s \quad\text{and}\quad x_{-k}=-x_k,\ k\in\Z^d.$$
Matrices in $\Ac^D$ preserves oddness of sequences and is stable by multiplication and bracket. Thus as a consequence of Lemma \ref{op} we get that every matrix $A\in\Ac_r^D$ naturally defines a continuous operator, still denoted by $A$, from $h^s_{\rm odd}$ to $h^{s-r}_{\rm odd}$ for any $s\in\R$ via the formula \eqref{Ax} and we have
 $$\Norm{ A}{\Lc(h^s_{\rm odd},h^{s-r}_{\rm odd})}\leq C \Norm{A}{0,|s|+|r|+d+1,r}$$
 for some constant $C$ depending only on $d$.

We can of course also consider Neumann boundary conditions or mixed boundary conditions. 

\subsubsection{Hermitian operators}\label{hermi}

In order to consider equations of Schr\"odinger form, we  define the set of Hermitian operators as the set of  $H\in\Ac$  satisfying 
\begin{equation}
\label{adjoint}
 H(m,n) = H^*(m,n) := \overline{H(n,m)}, \quad m,n \in \Z^d. 
\end{equation}
With obvious notations, we write this conditions $H = H^* :=  \overline{H}^T$ where the transpose matrix is defined by exchanging $m$ and $n$ in the coefficients. 
It is easy to check that for all $\Phi$ real-valued, the operator $H = A_\Phi$ is diagonal and Hermitian, and that $H= B_V$ is also Hermitian when $V$ is a real function. More generally, representation of operator of the form 
\begin{equation}
\label{order2}
u \mapsto \mathrm{div} (\sigma(x) \nabla u ) + X(x) \cdot\nabla u -\mathrm{div} (X(x) u) + V(x) u
\end{equation}
for smooth vector field $X$ with $\mathrm{div} X = 0$, and real functions $\sigma$ and $V$, yields to Hermitian operators. 

In section \ref{precon}, we will consider the Hermitian operator $H = - \Delta + V$ for a smooth real function $V(x)$ on $\T^d$ and numerical schemes to approximate the solutions of the associate Schr\"odinger equation $i \partial_t u = (- \Delta + V)  u$ .

\subsubsection{Symplectic matrices\label{sectionsymp}}

 
We  define symplectic systems as follows: for real matrices $A$, $B$ and $C$ ({\em i.e.} matrices with real coefficients) we set 
\begin{equation}
\label{defS}\boldsymbol{S}=\begin{pmatrix} A & B \\ C & -A^T \end{pmatrix} \quad \mbox{with} \quad B^T = B \quad \mbox{and}\quad C^T = C,
\end{equation} which is an element of $\Ac_{\boldsymbol{r}}$ (see Definition \eqref{defsys}) with $\boldsymbol{r} = \begin{pmatrix} r(A) & r(B) \\ r(C) & r(A) \end{pmatrix}$ where $r(A)$, $r(B)$ and $r(C)$ denote the orders of $A$, $B$ and $C$ respectively. 
To $\boldsymbol{S}$ we associate the symplectic system on $ \ell^2 \times \ell^2\ni(p,q)$
\begin{equation}
\label{eqsymp}
\frac{\dd}{\dd t} \begin{pmatrix}
p \\ q 
\end{pmatrix} = \boldsymbol{S} \begin{pmatrix}
p \\ q 
\end{pmatrix}, \qquad \boldsymbol{S} = \begin{pmatrix} A & B \\ C & -A^T \end{pmatrix} 
\end{equation}
and, when it is well defined\footnote{Using typically the fact that the unbounded part can be diagonalized explicitly in Fourier to define mild-solutions, an example is given below.}, its flow $e^{t \boldsymbol{S}}$  acting on $\ell^2 \times \ell^2$. This symplectic flow  preserves the canonical symplectic form: defining  $J = \begin{pmatrix} 0 & I \\-I & 0 \end{pmatrix}$ we have 
$$
(e^{ t \boldsymbol{S}})^T J e^{ t \boldsymbol{S}} = J. 
$$
A typical example is given by 
wave equations of the form $\partial_{tt} q - \Delta q = V(x) q$ that can be written 
$$
\frac{\dd}{\dd t}\begin{pmatrix}
p \\ q 
\end{pmatrix}
= \begin{pmatrix}
0 & \Delta + V \\ I & 0
\end{pmatrix}
\begin{pmatrix}
p \\ q 
\end{pmatrix}
$$
with $p = \partial_t$, where the right-hand sides defines a $2\times 2$ system of order $\begin{pmatrix} 0 & 2 \\ 0 & 0 \end{pmatrix}$. Note however that this system can be also reformulated by using pseudo-differential transformations as a skew symmetric system of orders $\leq 1$. We will give explicit examples in Section \eqref{water}. 

\section{Periodic matrices and discretization}\label{per}
Let $K$ be an even integer\footnote{Working with arbitrary integers is of course possible, by changing the structure of $G_K$ according to the parity of $K$, see \cite{Fao12}} and we define the grid points 
\begin{equation}\label{GK}
x_a = \frac{2\pi a}{K}, \quad a \in \{ -K/2, \ldots, K/2 - 1\}^d:=G_K
\end{equation}
and we identify $G_K$ with $(\Z /K\Z)^d := \Z_K^d$ the set of equivalent class modulo $K$ in each variable:  to each $a\in\Z_{K}^d$ we associate  $\hat a$ its unique representative  within $G_K$.
We set $h = \frac{2\pi}{K}$. The grid $x_a$ can thus be written $x_a = a h \in \T^d$, $a \in \Z_K^d$. When this grid is used to discretize a function $u: \T^d \to \C$, we expect to approach $u(ah) \simeq u_a$, $a \in \Z_K^d$. Hence the function space is discretized by $u = (u_a)_{a \in \Z_K^d} \in \ell^2(\Z_K^d)$\footnote{Notice that $\ell^2(\Z_K^d)$ is finite dimensional space equivalent to $\C^{K^d}$, we choose to equipped it with the $\ell_2$-norm.}. 
Very schematically a linear numerical scheme with mesh $h = \frac{2\pi}{K}$ is an  application 
$$\ell^2(\Z_K^d)\ni(u^{K}_a)_{a \in \Z_K^d}\mapsto (v^{K}_a)_{a \in \Z_K^d}\in\ell^2(\Z_K^d).$$
which can be represented by a periodic matrix $M^{K}$:
$$(v^{K}_a)_{a \in \Z_K^d}=M^{K}(u^{K}_a)_{a \in \Z_K^d}.$$
We thus naturally see the need of a concept of a {\em family} of periodic matrices $M^K$ indexed by $K$, the number of points in our grid (or equivalently in the periodic case, by $h$ the mesh size). Of course, as finite dimensional matrix, the norm of $M^K$ is bounded but depends {\em a priori} on $K$. 
In order to evaluate the convergence of the scheme, or to study the global properties of numerical schemes at the continuous limit $K \to +\infty$, it will be essential to have norms on these families of matrices which are {\em uniform} in $K$. Moreover, the notion of pseudo-differential operator is well expressed in term of Fourier transform. Hence in the discrete case, we expect the Fourier transformation 
$$
A^K = \Fc_K^{-1} M^K \Fc_K
$$
to inherit the commutator properties of continuous systems, uniformly in $K$. Here $\Fc_K$ stands for the discrete Fourier transform, see \eqref{TFK}. 
  This justifies the space of families of periodic matrices that we will define in the next section.

\subsection{A class of families of periodic matrices}
\begin{notation}
Let $K \in 2 \N^*$ an even integer. 
For $a\in\Z_{K}^d$,  we denote by $\hat a$ its  representative  within $\{-K/2, \ldots, K/2 - 1\}^d := G_K$ and we set $[a]=|\hat a_1|+\cdots+|\hat a_d|$.  
\end{notation}
This quantity has some good properties: 
\begin{lemma}\label{classe}
For all $a,b,c\in\Z_{K}^d$ we have
\begin{itemize}
\item[(i)] $[a+b]\leq [a]+[b]$,
\item[(ii)] $(1+[a]+[c])\leq 2(1+[a]+[b])(1+[c-b])$.
\end{itemize}
\end{lemma}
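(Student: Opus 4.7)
The plan is to reduce \emph{(i)} to a coordinate-wise one-dimensional statement and then to derive \emph{(ii)} as a direct consequence of \emph{(i)}.

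For \emph{(i)}, since $[a]=\sum_{j=1}^{d}|\hat a_j|$, it suffices to prove the one-dimensional claim: for $\hat\alpha,\hat\beta\in\{-K/2,\ldots,K/2-1\}$ one has $|\widehat{\alpha+\beta}|\le|\hat\alpha|+|\hat\beta|$. The integer $\hat\alpha+\hat\beta$ lies in $[-K,K-2]$, so its unique representative in the fundamental domain $G_K$ is of the form $\hat\alpha+\hat\beta+kK$ with $k\in\{-1,0,1\}$. I would then run a short case analysis on $k$: if $k=0$, the claim is the usual triangle inequality. If $k=-1$, then $\hat\alpha+\hat\beta\ge K/2$, which forces both $\hat\alpha,\hat\beta>0$ (otherwise $\hat\alpha+\hat\beta<K/2$), so $|\hat\alpha|+|\hat\beta|=\hat\alpha+\hat\beta$ and $|\widehat{\alpha+\beta}|=K-(\hat\alpha+\hat\beta)\le\hat\alpha+\hat\beta$ since $\hat\alpha+\hat\beta\ge K/2$. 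The case $k=+1$ is symmetric.

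For \emph{(ii)}, I first note that $[\,\cdot\,]$ is symmetric, i.e.\ $[-a]=[a]$: this is clear componentwise since $-\hat a_j\in G_K$ when $\hat a_j\neq -K/2$, while in the boundary case $\hat a_j=-K/2$ one still has $\widehat{-a}_j=-K/2$, so absolute values are preserved. Applying \emph{(i)} to the decomposition $c=(c-b)+b$ then yields $[c]\le[c-b]+[b]$, hence
$$
1+[a]+[c]\;\le\;1+[a]+[b]+[c-b].
$$
Setting $x=[a]+[b]$ and $y=[c-b]$, both nonnegative, the elementary bound $1+x+y\le(1+x)(1+y)$ gives
$$
1+[a]+[c]\;\le\;(1+[a]+[b])(1+[c-b])\;\le\;2(1+[a]+[b])(1+[c-b]),
$$
the factor $2$ being a harmless cushion.

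The only real obstacle is the case analysis in \emph{(i)}, where the asymmetry of the fundamental domain $G_K=\{-K/2,\ldots,K/2-1\}$ (which includes $-K/2$ but excludes $+K/2$) requires some attention; once one checks that wrap-around can only occur when the two summands share the same strict sign, the bound is immediate, and \emph{(ii)} follows mechanically.
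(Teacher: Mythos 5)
Your proof is correct and follows essentially the same strategy as the paper: reduce to the one-dimensional case, split according to whether wrap-around occurs, and deduce \emph{(ii)} from \emph{(i)} via $1+x+y\leq(1+x)(1+y)$. The paper handles the wrap-around case slightly more economically by noting that $[a+b]\leq K/2\leq|\hat a+\hat b|\leq[a]+[b]$ whenever $\hat a+\hat b\notin G_K$, which avoids your sign analysis on the summands; otherwise the arguments coincide (and your observation that $[-a]=[a]$ is correct but not actually needed for \emph{(ii)}).
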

\begin{proof} {\em (i)} It suffices to consider the case $d=1$. If $\hat a +\hat b \in G_K = \{-K/2, \ldots, K/2 - 1\}$ then $\widehat{a+b}=\hat a+\hat b$ and thus $|\widehat{a+b}|\leq|\hat a|+|\hat b|$.  If $\hat a +\hat b \notin G_K$ then $[a+b] = |\widehat{a + b}| \leq K/2   \leq|\hat a +\hat b|\leq [a]+[b]$.\\
{\em (ii)} is an easy consequence of {\em (i)}. 
\end{proof}
We will consider familly of matrices $A^K: \ \Z_K^d\times\Z_K^d\mapsto \C$, indexed by $K$. For one give $K$, the matrix $A^K$ is identified with the collection of its complex elements $A^K := \{A^K(m,n)\}_{(m,n) \in \Z_K^d \times \Z_K^d}$, where $A^K(m,n)$ is now $K$-periodic in $m$ and $n$. For such a matrix, we extend easily the definition of $\Delta_j^+$, $\Delta_j^-$ and $\Delta^\alpha$ by periodicity. 

\begin{definition}
\label{defper}
Let $r\in\R$. We define the class $\Ac_r^{\rm per}$ of  families of periodic matrices   of order $r$ as follows: the family $A^\bullet=\{A^K\}_{K\in2\N^*}$ belongs to $\Ac_r^{\rm per}$ if  for all $\alpha\in\Z^d$  and all $N\in\N$ there exists $C_{N,\alpha}>0$ such that
\begin{equation}
\label{AKbound}
 \forall K \in \N, \quad \forall m,n\in\Z_{K}^d, \quad \big| (\Delta^\alpha A^K)(m,n)\big|\leq C_{N,\alpha}(1+[m]+[n])^{r-|\alpha|}(1+[m-n])^{-N}.
 \end{equation}
\end{definition}
We denote  $\Ac^{\rm per}=\cup_{r\in\R}\Ac^{\rm per}_r$ which form a graded algebra.
We define the adapted  family of semi-norms: for $A = \{A^K\}_{K \in 2\N^*}$, 
$$\DNorm{A^\bullet}{\alpha,N,r}:=\sup_{K\in\N} \sup_{m,n\in \Z_{K}^d}\frac{\big| (\Delta^\alpha A^K)(m,n)\big| (1+[m-n])^{N}}{(1+[m]+[n])^{r-|\alpha|}}.$$
\begin{remark}
In Equation \eqref{AKbound} it is important to notice that the matrices $K$ are of size $K \times K$, and the norm $[a]$ also depend on $K$. However, the key property is that $N$, $\alpha$ and the constant $C_{N,\alpha}$ are assumed to be uniform in $K$. 
\end{remark}

We then define the product and commutator as follows: for $A^\bullet = \{A^K\}_{K \in 2\N*}$ and $B^\bullet = \{ B^K\}_{K \in 2\N^*}$ in $\Ac^{\rm per}$, we set 
$$
A^\bullet B^\bullet = \{ A^K B^K\}_{K \in 2 \N^*} \quad \mbox{and} \quad [A^\bullet,B^\bullet] = \{\, [A^K,B^K]\,\}_{K \in 2 \N^*}. 
$$
Thanks to Lemma \ref{classe}, the proofs of Lemmas \ref{prod} and \ref{com} are transposable, {\em mutatis mutandis}, to the periodic case. Thus we obtain 
\begin{proposition}\label{Prop-Struct-Per} Let $r_1,r_2\in\R$.
\begin{itemize} 
\item[(i)] The matrix product is a continuous map from $\Ac^{\rm per}_{r_1}\times\Ac^{\rm per}_{r_2}\ni (A^\bullet,B^\bullet)\mapsto A^\bullet B^\bullet\in \Ac^{\rm per}_{r_1+r_2}$. More precisely, for all $\alpha\in\Z^d$  and all $N\in\N$ there exists $C(\alpha,N,r_1,r_2)>0$, independent of $K$, such that
$$\DNorm{ A^\bullet B^\bullet}{\alpha,N,r_1+r_2}\leq C(\alpha,N,r_1,r_2)\Big(\sum_{\beta\in [0,\alpha]} \DNorm{ A^\bullet}{\beta,N+1+|r_2|,r_1}\Big)\Big(\sum_{\beta\in [0,\alpha]} \DNorm{ B^\bullet}{\beta,N+1+|r_1|,r_2}\Big).$$
\item[(ii)] The commutator gains one order : the map $\Ac^{\rm per}_{r_1}\times\Ac^{\rm per}_{r_2}\ni(A,B)\mapsto [A^\bullet,B^\bullet]\in \Ac^{\rm per}_{r_1+r_2-1}$ is uniformly continuous in $K$. More precisely, for all $\alpha\in\Z^d$  and all $N\in\N$ there exists $C(\alpha,N,r_1,r_2)>0$, independent of $K$, such that
\begin{equation}\label{-1K}\DNorm{\,  [A^\bullet,B^\bullet]\, }{\alpha,N,r_1+r_2-1}\leq C(\alpha,N,r_1,r_2)\Big(\sum_{\substack{\beta,\gamma\in [0,\alpha]\\ M_1,M_2\leq 2N+|r_1|+|r_2|}} \DNorm{A^\bullet}{\beta,M_1,r_1} \DNorm{B^\bullet}{\gamma,M_2,r_2}\Big).\end{equation}
\end{itemize}
\end{proposition}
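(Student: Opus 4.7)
The plan is to follow the proof of Proposition \ref{Prop-Struct} line by line, replacing $|\cdot|$ by $[\cdot]$, sums over $\Z^d$ by sums over $\Z_K^d$, and then checking that every constant appearing in the argument is independent of $K$. The algebraic identities of Lemma \ref{alg} are purely formal: they hold verbatim for families of periodic matrices, since $\Delta_j^\pm$ is defined by translation of the indices and $(A^K B^K)(m,n) = \sum_{k \in \Z_K^d} A^K(m,k) B^K(k,n)$ behaves as a standard matrix product at each fixed $K$. Thus assertion (i) will follow from a periodic version of Lemma \ref{prod} together with Lemma \ref{alg}(i)--(ii) and the obvious fact that $\DNorm{B^{\bullet,j,\pm}}{\alpha,N,r} \lesssim \DNorm{B^\bullet}{\alpha,N,r}$, while assertion (ii) will follow from a periodic version of Lemma \ref{com} and Lemma \ref{alg}(iii)--(iv).

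For the periodic product estimate I would bound $|(A^K B^K)(m,n)|$ using the defining inequalities of Definition \ref{defper} and reduce matters to the analogue of \eqref{oui} with $[\cdot]$ in place of $|\cdot|$. Two ingredients are needed. First, the multiplicative inequality $(1+[m]+[k])^r \lesssim_r (1+[m]+[n])^r (1+[n-k])^{|r|}$, which is obtained exactly as in the continuous case by splitting according to the sign of $r$ and invoking Lemma \ref{classe}(ii). Second, the uniform summability $\sum_{k \in \Z_K^d}(1+[m-k])^{-d}(1+[n-k])^{-d} \lesssim_d 1$. Here one uses that $[\cdot]$ agrees with $|\cdot|$ on representatives in $G_K$, so that the number of $k \in \Z_K^d$ with $[k] \leq R$ is controlled by the number of points in the corresponding Euclidean ball in $\Z^d$; the splitting argument $|n-m-k| \leq |k|/2$ versus $|n-m-k| > |k|/2$ then gives the estimate independently of $K$.

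The main obstacle is to adapt the telescopic summation step in Lemma \ref{com}. In the continuous proof, $B(m,n-\ell) - B(m+\ell,n)$ is expanded as a sum of at most $|\ell|$ unit discrete increments along a straight path in $\Z^d$. In the periodic setting, one must interpret this by taking the canonical representative $\hat\ell \in G_K$ of $\ell \in \Z_K^d$ and telescoping along a path in $\Z_K^d$ of length $[\ell] = |\hat\ell_1| + \cdots + |\hat\ell_d|$; the $K$-periodicity of $B^K$ and of all its discrete derivatives $\Delta_j^\pm B^K$ ensures that the decomposition is consistent modulo $K$. Once this is settled, the bookkeeping on $\Sigma_{1,\ell}$ and $\Sigma_{2,\ell}$ proceeds as in Lemma \ref{com}: the sandwich inequalities between $(1+[m+j]+[n+j-\ell])$ and $(1+[m]+[n])$, as well as $(1+[m-n+\ell])^2(1+[\ell])^2 \geq (1+[m-n])(1+[\ell])$, all reduce to iterated applications of Lemma \ref{classe}.

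Summing finally over $\ell \in \Z_K^d$ bounds everything by the convergent series $\sum_{\ell \in \Z^d}(1+|\ell|)^{-(d+1)}$, uniformly in $K$, and one recovers \eqref{-1K} with constants of the same shape as in \eqref{-1}. Since every step produces constants that depend only on $d$, $r_1$, $r_2$, $N$ and $\alpha$, the resulting estimates are automatically uniform in $K$, which is the decisive point of the proposition. The periodic analogues of Lemmas \ref{prod} and \ref{com} then combine with the algebraic identities of Lemma \ref{alg}, transposed to the periodic setting, to yield both assertions.
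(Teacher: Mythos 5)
Your proposal is correct and follows essentially the same route as the paper, whose own proof consists of the one‑line observation that, thanks to Lemma \ref{classe}, the arguments of Lemmas \ref{prod} and \ref{com} transfer \emph{mutatis mutandis} to the periodic setting. Your elaboration of the two places where a check is genuinely needed --- the uniform-in-$K$ summability of $\sum_{k\in\Z_K^d}(1+[m-k])^{-d}(1+[n-k])^{-d}$ and the telescoping step in Lemma \ref{com} interpreted via canonical representatives $\hat\ell\in G_K$ --- fills in exactly what the paper leaves implicit.
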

Families of matrices $A^\bullet = \{A^K\}_{K \in 2\N^*}$ in $\Ac^{\rm per}$ define naturally families of finite dimensional operators $A^K$ on 
$$
\ell^2(\Z_K^d) :=\{(x_k)_{k\in\Z^d}\in\C^{\Z^d}\mid x_{k}=x_j \text{ when } k_i\equiv  j_i  \;\mathrm{mod}\; K \text{ for }i=1,\cdots,d\}= \C^{\Z^d_K}. 
$$
for each $K\in2\N^*$ via the formula 
$$
(A^Kx)_a=\sum_{k\in\Z_{K}^d}A^K(a,k)x_k, \quad a\in \Z_{K}^d.
$$
The space $\ell^2(\Z_K^d)$ has finite dimension nevertheless Lemma \ref{op} has an interesting counterpart in the periodic case: for $x\in \ell^2(\Z_K^d)$ we denote 
$$
\Norm{x}{s,K}^2=\sum_{k\in \Z_{K}^d}(1+[k])^{2s}|x_k|^2, 
$$
and hence $\Norm{x}{\ell^2(\Z_K^d)} = \Norm{x}{0,K}$. 
Of course, since $\ell^2(\Z_K^d)$ has finite dimension, all the norms $\Norm{\,\cdot\,}{s,K}$ are equivalent on $\C^{\Z^d_K}$, but with constant depending on $K$, for instance 
$$
\Norm{x}{s,K} \leq (1 + |K|)^{s} \Norm{x}{0,K}
$$
however, with the help of Definition \ref{defper}, we have with the same proof as Lemma \ref{op}
\begin{lemma}\label{op-per}
Let $r,s\in\R$ and  $A^\bullet = \{A^K\}_{K \in 2\N^*}\in\Ac^{\rm per}_r$  we have
 $$
 \forall\, K \in 2\N^*\quad \Norm{ A^Kx}{s-r,K}\leq C \DNorm{A^\bullet}{0,|s|+|r|+d+1,r}\Norm{x}{s,K}$$
 for some constant $C$ depending only on $d$ (and thus independent of $K$).
\end{lemma}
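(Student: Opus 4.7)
The plan is to transpose the proof of Lemma \ref{op} verbatim, replacing the Euclidean absolute value $|\cdot|$ by the periodic quantity $[\cdot]$, and to verify at each step that the resulting constants do not depend on $K$. The key observation is that Lemma \ref{classe} supplies exactly the sub-additivity and pseudo-triangle properties that the Euclidean $|\cdot|$ enjoys on $\Z^d$, so the same chain of inequalities goes through.

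Given $x\in \ell^2(\Z_K^d)$, I would set $\tilde x_k = (1+[k])^s x_k$, so that $\Norm{\tilde x}{0,K} = \Norm{x}{s,K}$. Starting from
\begin{equation*}
\Norm{A^K x}{s-r,K}^2 = \sum_{n\in\Z_K^d}(1+[n])^{2(s-r)}\Big|\sum_{k\in\Z_K^d} A^K(n,k) x_k\Big|^2,
\end{equation*}
I would apply the pointwise bound $|A^K(n,k)|\leq \DNorm{A^\bullet}{0,N,r}(1+[n]+[k])^r (1+[n-k])^{-N}$ coming from Definition \ref{defper}. Next, using Lemma \ref{classe}(i)--(ii), I would establish the two periodic pseudo-triangle inequalities
\begin{equation*}
(1+[n]+[k])^r \lesssim_r (1+[n])^r (1+[n-k])^{|r|}, \qquad (1+[n])^s \lesssim_s (1+[n-k])^{|s|}(1+[k])^s,
\end{equation*}
treating separately the signs of $r$ and $s$ exactly as in the proof of Lemma \ref{op}. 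Substituting these bounds reduces the estimate to
\begin{equation*}
\Norm{A^K x}{s-r,K}^2 \lesssim_{r,s} \DNorm{A^\bullet}{0,N,r}^2 \sum_{n\in\Z_K^d}\Big(\sum_{k\in\Z_K^d}\frac{|\tilde x_k|}{(1+[n-k])^{N-|s|-|r|}}\Big)^2.
\end{equation*}

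The last step is a Young-type convolution inequality on the finite abelian group $\Z_K^d$, and this is the one place where uniformity in $K$ must be checked. Writing the inner sum as the $\Z_K^d$-convolution of $|\tilde x|$ with the kernel $\varphi_K(k):=(1+[k])^{-(N-|s|-|r|)}$, the discrete Young inequality gives an $\ell^2$-bound of $\Norm{\varphi_K}{\ell^1(\Z_K^d)}\Norm{\tilde x}{0,K}$. Since $[k]=|\hat k|$ under the bijection $\Z_K^d\simeq G_K\subset\Z^d$, one has the $K$-independent bound
\begin{equation*}
\Norm{\varphi_K}{\ell^1(\Z_K^d)} = \sum_{m\in G_K}(1+|m|)^{-(N-|s|-|r|)} \leq \sum_{m\in\Z^d}(1+|m|)^{-(N-|s|-|r|)} < +\infty
\end{equation*}
as soon as $N-|s|-|r|\geq d+1$. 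Choosing $N = |s|+|r|+d+1$ yields the stated estimate with a constant depending only on $d$, $r$ and $s$.

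The only genuine obstacle is the verification of uniformity in $K$ in the convolution step; once Lemma \ref{classe} is in hand, the algebraic manipulations are identical to those of Lemma \ref{op} and no new difficulty arises from the periodic structure.
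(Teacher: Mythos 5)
Your proposal is correct and takes exactly the route the paper intends: the paper simply says Lemma \ref{op-per} has ``the same proof as Lemma \ref{op}'' once Definition \ref{defper} is in hand, and you carry out that transposition, checking that Lemma \ref{classe} supplies the sub-additivity and pseudo-triangle properties of $[\cdot]$ that play the role of the corresponding properties of $|\cdot|$ on $\Z^d$, and observing that the $\ell^1$ mass of the convolution kernel on $\Z_K^d$ is bounded uniformly in $K$ by the corresponding sum on $\Z^d$. Your remark that the resulting constant depends on $d$, $r$ and $s$ (as in Lemma \ref{op}) rather than ``only on $d$'' as the statement phrases it is a fair reading of what the argument actually delivers.
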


\subsection{Representation of finite different schemes}\label{fds}

For $j = 1,\ldots,d$, we  define the  finite difference operators from $\ell^2(\Z_K^d)$ into $\ell^2(\Z_K^d)$:
$$
(\delta^+_{j,K}  u)_a = \frac{ u_{a+e_j} - u_a}{h} \quad \mbox{and} \quad 
(\delta^-_{j,K}  u)_a = \frac{ u_{a} - u_{a- e_j}}{h} , \quad i = 1, \ldots, d, \quad h = \frac{2\pi}{K}
$$
Another important tool is the discrete Fourier transform: $\Fc_K: \ell^2(\Z_K^d) \to \ell^2(\Z_K^d)$ such that for all $v = (v_a)_{a \in \Z_K^d} \in \ell^2(\Z_K^d)$, 
\begin{equation}
\label{TFK}
(\Fc_K v)_a = \frac{1}{K^d} \sum_{b \in \Z^d_K} e^{- \frac{2 i \pi a \cdot b}{K}} v_b= 
\frac{1}{K^d} \sum_{b \in \Z^d_K} e^{-i  a \cdot x_b} v_b. 
\end{equation}
It inverse is given by 
$$
(\Fc_K^{-1} v)_a =  
\sum_{b \in \Z^d_K} e^{\frac{2 i \pi a \cdot b}{K}} v_b = (K^d \Fc^* v)_a. 
$$
It is well known that the transformation $K^{d/2} \Fc_K$ is unitary, and that this transformation can be efficiently implemented by using Fast Fourier Transform algorithms.

\subsubsection{Difference operators}
In a finite dimensional setting, the operators $h\delta_{j,K}^{\pm}$ are represented by a matrices with  $\mp1$ on the diagonal and $\pm1$ on one of the first diagonals, with complementary coefficient $\pm1$ in the corner to ensure the periodicity. For instance when $d=1$ we have
$$\delta_{1,K}^+= \frac{1}{h}\left(\begin{array}{ccccc}-1 & 1&0&\cdots& 0\\
0&-1&1&\cdots&0\\
 \vdots& & \ddots&\ddots &  \\
 0&  & &-1&1\\
1&0&\cdots&0&-1 \end{array}\right).$$
Let us remark that this family of matrices, when $K \in 2\N^*$, does not define an element of any  $\Ac_r^{\rm per}$, $r \in \R$. In particular because of the entry of index $(0,0)$ equals $\mp\frac{K}{2\pi}$ which cannot be estimated independently of $K$ as it should be. Nevertheless we are going to prove that in the Fourier side these operators are in $\Ac_1^{\rm per}$ (and diagonal).

Let $Q_K$ be the matrix associated with the discrete fourier transform $K^{d/2} \Fc_K$: 
$$
Q_K(a,b) = K^{-d/2}e^{- 2i\pi a\cdot b/K} = \Big(\frac{ h}{2\pi}\Big)^{d/2}e^{- i h a \cdot b }. 
$$
By using the aliasing formula
$$
\Big(\frac{ h}{2\pi}\Big)^{d}\sum_{b \in \Z_K^d} e^{h i j \cdot b} = \left\{
\begin{array}{l}
1 \quad \mbox{if} \quad j = mK, \quad m \in \Z^d\\[1ex]
0\quad \mbox{else}, 
\end{array}
\right.
$$
we see that $Q_K$ is unitary, {\em i.e.} $Q_K^* Q_K = 1$, see \eqref{adjoint}. 
Moreover, we have the following Lemma: 
\begin{lemma}
Let $u = (u_a)_{a \in \Z_K^d}$ and $\hat u = \Fc_K u= (\hat u_b)_{b \in \Z_K^d}$. Then we have for $j = 1,\ldots,d$, 
$$
\left|
\begin{array}{l}
\delta_{j,K}^{+} =  Q_K^* D^{+}_{j,K}  Q_K = \Fc_K^{-1} D^{+}_{j,K}  \Fc_K\quad\mbox{and}\\[1ex]
\delta_{j,K}^{-} =  Q_K^* D^{-}_{j,K}  Q_K = \Fc_K^{-1} D^{-}_{j,K}  \Fc_K
\end{array}
\right.
$$
where $D^+_{j,K}$ and $D^-_{j,K}$ are the diagonal operators 
$$
\left|
\begin{array}{rcll}
(D^+_{j,K} \hat u)_a &=& \frac{1}{h} (e^{i h a_j } - 1) \hat u_a, & a \in \Z_K^d, 
\quad\mbox{and}\\[1ex]
(D^+_{j,K} \hat u)_a &=& \frac{1}{h} (1 - e^{- i h a_j }) \hat u_a, & a \in Z_K^d. 
\end{array}
\right.
$$
Furthermore, the matrices $\{D^+_{j,K}\}_{K \in 2\N*}$ and $\{D^{-}_{j,K}\}_{K \in 2\N*}$ are in $\Ac_1^{\rm per}$. 
\end{lemma}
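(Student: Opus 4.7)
The statement splits into two essentially independent verifications: the conjugation identities expressing $\delta_{j,K}^\pm$ as Fourier-diagonalized, and the claim that the diagonal families $\{D_{j,K}^\pm\}_{K\in 2\N^*}$ lie in $\Ac_1^{\rm per}$.

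For the first part, I would simply invoke the translation-covariance of the discrete Fourier transform. Applying $\Fc_K$ to $(\delta_{j,K}^+ v)_b = (v_{b+e_j}-v_b)/h$ and shifting the summation index $b\mapsto b-e_j$ in the first resulting sum produces a phase factor $e^{iha_j}$ in front of $(\Fc_K v)_a$, yielding $\Fc_K\delta_{j,K}^+ = D_{j,K}^+\Fc_K$ and hence the stated conjugation. The identity with $Q_K$ in place of $\Fc_K$ follows from $Q_K = K^{d/2}\Fc_K$ combined with the unitarity $Q_K^*Q_K=I$, which is itself a direct consequence of the aliasing formula recalled just above the lemma. The argument for $\delta_{j,K}^-$ is symmetric.

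For the second part, the decisive observation is that each $D_{j,K}^\pm$ is diagonal, say $D_{j,K}^\pm(m,n) = \phi_K^\pm(m_j)\,\delta_{mn}$ with $\phi_K^+(x)=(e^{ihx}-1)/h$ and $\phi_K^-(x)=(1-e^{-ihx})/h$. Since the shifts $(\cdot)^{j',\pm}$ translate both indices by the same vector, diagonality is preserved, so every off-diagonal entry of $\Delta^\alpha D_{j,K}^\pm$ vanishes identically; this trivializes the factor $(1+[m-n])^{-N}$, as only $m=n$ contributes. Moreover $\phi_K^\pm$ depends only on its $j$-th coordinate, so $\Delta_{j'}^{\pm}$ with $j'\neq j$ annihilates $D_{j,K}^\pm$, and the analysis reduces to $\alpha = \alpha_j e_j$. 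The remaining estimate is then obtained from the closed-form identity $(\Delta_j^+)^k\phi_K^+(x) = \tfrac{(e^{ih}-1)^k}{h}e^{ihx}$, which gives $|(\Delta^\alpha D_{j,K}^+)(m,m)| = |e^{ih}-1|^{|\alpha|}/h$. For $|\alpha|=0$, the bound $|e^{ihm_j}-1|\leq h|m_j|$ furnishes $|\phi_K^+(m_j)|\leq [m]$, which is the order-$1$ control; for $|\alpha|\geq 1$, the estimate $|e^{ih}-1|\leq h$ gives $h^{|\alpha|-1}$.

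The only genuine, though mild, obstacle is the uniformity in $K$ of the remaining inequality $h^{|\alpha|-1}\lesssim (1+[m])^{1-|\alpha|}$. Rewriting it as $(h(1+[m]))^{|\alpha|-1}\lesssim 1$, it follows at once from the discrete feature $h(1+[m]) \leq h + h\cdot dK/2 \leq 2\pi(1+d)$, using $[m]\leq dK/2$ and $h=2\pi/K$. Combined with the diagonality discussion above, this yields the full estimates of Definition~\ref{defper} with constants depending only on $d$ and $|\alpha|$, uniformly in $K$, for both $D_{j,K}^+$ and $D_{j,K}^-$.
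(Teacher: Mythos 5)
Your proposal is correct and follows essentially the same route as the paper: the conjugation identities via the translation covariance of the discrete Fourier transform (an index shift producing the phase $e^{iha_j}$), and membership in $\Ac_1^{\rm per}$ by estimating the diagonal entries with $|e^{ix}-1|\leq |x|$ for $\alpha=0$ and bounding the iterated differences by $h^{|\alpha|-1}\lesssim (1+[m])^{1-|\alpha|}$ using $[m]\lesssim K$ and $h=2\pi/K$. The only (cosmetic) difference is that you compute the iterated finite differences of the symbol in closed form, where the paper invokes Taylor estimates on its derivatives; both give the same bound, uniformly in $K$.
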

\begin{proof}
We have $u = \Fc_K^{-1} \hat u$ which is written in coordinates 
$$
u_a = \sum_{b \in \Z_K} e^{\frac{2 i \pi a \cdot b}{K}} \hat u_b. 
$$
Hence 
$$
u_{a + e_j} = \sum_{b \in \Z_K^d} e^{\frac{2 i \pi a \cdot b}{K}} e^{\frac{2 i \pi e_j \cdot b}{K}}\hat u_b = \sum_{b \in \Z_K^d} e^{\frac{2 i \pi a \cdot b}{K}} e^{\frac{2 i \pi b_j}{K}} \hat u_b . 
$$
This shows that 
$$
(u_{a + e_j}  - u_a)_{a \in \Z_K^d} = \Fc_K^{-1} \left\{ (  (e^{\frac{2 i \pi b_j}{K}} - 1) \hat u_b)_{b \in \Z_K^d}  \right\}
$$
and, as $h = \frac{2\pi}{K}$, we obtain the representation of $\delta_{j,K}^{+}$ given in the Lemma.   \\
To prove that $\{D^+_{j,K}\}_{K \in 2\N*}\in\Ac_1^{\rm per}$, we note that the components of the matrix $D^+_{j,K}$ satisfy 
\begin{equation}\label{DjK}
D^{+}_{j,K} (a,b) = \frac{1}{h} (e^{ih a_j}- 1)  \delta_{a,b}, \quad a,b \in \Z_K^d. 
\end{equation}
But we as $h = 2\pi/K$, we have $e^{ih a_j} = e^{i h \hat a_j}$. 
Hence using the fact that $|e^{ix} - 1| \leq  |x|$ for real numbers $x$, we have 
$$
|D^{+}_{j,K} (a,a) | \leq  |\hat a_j |\leq [a]\quad \mbox{as} \quad a \in \Z_K^d. 
$$
Moreover, we have for $\alpha \geq 1$, 
$$
\partial_{a_k}^\alpha D^{+}_{j,K} (a,a) = i^\alpha h^{\alpha - 1} \delta_{kj} e^{i h a_j}. 
$$
By using Taylor expansion, we thus have, as in \eqref{estDA}, that for $\alpha \in \Z^d\setminus\{0\}$, 
$$
|\Delta^\alpha D^{+}_{j,K} (a,a) | \lesssim_\alpha  h^{1 - |\alpha|}  \lesssim \frac{1}{K^{|\alpha| - 1}} \lesssim ( 1 + [a])^{1 - |\alpha|}
$$
as we always have $[a] \leq K$. Therefore the familly $\{D^+_{j,K}\}_{K \in 2\N^*} \in\Ac_1^{\rm per}$. 
\end{proof}

\subsubsection{Pointwise Multiplication}\label{mult}
We consider now a periodic function $V(x)$, $x \in \T^d$. For $k \in \Z^d$, we denote by $(\Fc V)_k$ the Fourier transform of $V$. We thus have 
$$
V(x) = \sum_{k \in \Z^d} (\Fc V)_k e^{i k \cdot x}. 
$$
A natural discretization of the operator $u(x) \mapsto V(x) u(x)$ on a grid consists in the pointwise multiplication  
$$
(u_a)_{a \in \Z_K^d} \mapsto (V_a u_a)_{a \in \Z_K^d} =: \{(B_{V,K} u)_a\}_{a \in \Z_K^d}
$$
where $V_a = V(ah)$ defines a element of $\ell^2(\Z_K^d)$. Denoting by $\hat V_a = \Fc_K \{ (V_a)_{a \in \Z_K^d}\}$ the discrete Fourier transform of the sequence $V_a$, we thus have 
\begin{equation}\label{certes}
V_a = V(ah) = \sum_{b \in \Z_K} \hat V_b e^{i h b \cdot a } = \sum_{k \in \Z^d} (\Fc V)_k e^{i h k \cdot a}. 
\end{equation}
In particular this shows that 
$$
\hat V_b = \sum_{\ell\in\Z^d} (\Fc V)_{b + \ell K}.
$$
Using \eqref{certes} for $V_a$ and $u_a$ we get
$$
V_a u_a = 
\sum_{b,c \in \Z_K^d} \hat V_b e^{i (b + c) \cdot a h} \hat u_c =\sum_{f  \in \Z_K^d} e^{i a \cdot f h} \Big( \sum_{b + c = f} \hat V_b\hat u_c\Big)
$$
which leads to
$$
\Fc_K \{ (V_b u_b)_{b \in \Z_K^d} \}_a  = \Big( \sum_{b \in \Z_K^d} \hat V_{a-b}\hat u_b \Big)_a.
$$
As in the previous section, we obtain the following result: 
\begin{lemma}
\label{lemalias}
For $K \in 2 \N$, 
let $u = (u_a)_{a \in \Z_K^d}$ and $\hat u = \Fc_K u$ and we denote $\hat u = (\hat u_b)_{b \in \Z_K^d}$ its components. Let $V: \T^d \to \C$ be a smooth periodic function, and let $B_{V,K}$ be the diagonal operator acting on $u$ defined by $(B_{V,K} u)_a = V_a u_a$ where $V_a = V(ah)$, $a\in  \Z_K^d$. 
Then we have 
$$
B_{V,K} =  \Fc_K^{-1} M_{V,K} \Fc_K
$$
where $M_{V,K}$ is the matrix with entries
\begin{equation}\label{MVK}
M_{V,K}(a,b) = \hat V_{a -b} = \sum_{\ell \in \Z^d} (\Fc V)_{a - b + \ell K}, \ a,b\in\Z_K^d
\end{equation}
where $(\Fc V)_k$ denote the coefficients of the Fourier transform of the periodic function $V(x)$.\\ 
Furthermore the family of matrices $M_{V,\bullet}:=\{M_{V,K}\}_{K\in2\N}$ belongs to $\Ac_0^{\rm per}$. 
\end{lemma}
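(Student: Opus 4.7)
The identity $B_{V,K} = \Fc_K^{-1} M_{V,K} \Fc_K$ with $M_{V,K}(a,b) = \hat V_{a-b}$ is already obtained by the computation preceding the statement: starting from $V_a u_a$, expanding both $V_a$ and $u_a$ via \eqref{certes} and collecting terms of the form $e^{iafh}$ gives a convolution on $\Z_K^d$ whose kernel is precisely $\hat V_{a-b}$. The second expression $\hat V_k = \sum_{\ell \in \Z^d}(\Fc V)_{k + \ell K}$ is then the standard aliasing relation, obtained by plugging $V(x) = \sum_{k \in \Z^d} (\Fc V)_k e^{ik\cdot x}$ into $\hat V_b = \frac{1}{K^d}\sum_a e^{-iahb} V_a$ and using the aliasing formula recalled just before \eqref{TFK}.

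The core task is to prove that $M_{V,\bullet} \in \Ac_0^{\rm per}$. My plan is to exploit the key structural observation that $M_{V,K}(m,n)$ depends only on $m - n$ in $\Z_K^d$. Consequently, for every $j \in \{1, \ldots, d\}$,
\[
M_{V,K}(m + e_j, n + e_j) = \hat V_{(m+e_j)-(n+e_j)} = \hat V_{m-n} = M_{V,K}(m,n),
\]
so $\Delta_j^{\pm} M_{V,K} = 0$ and therefore $\Delta^\alpha M_{V,K} = 0$ identically for every $\alpha \in \Z^d \setminus \{0\}$. Hence the estimate \eqref{AKbound} in Definition \ref{defper} is trivially satisfied with zero right-hand side whenever $\alpha \neq 0$, and we are reduced to proving the case $\alpha = 0$, namely
\[
|M_{V,K}(m,n)| \leq C_N (1 + [m-n])^{-N}, \qquad m,n \in \Z_K^d,
\]
with a constant $C_N$ independent of $K$.

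The plan for this bound is to split the aliasing sum $\hat V_{m-n} = \sum_{\ell \in \Z^d}(\Fc V)_{\widehat{m-n} + \ell K}$ into $\ell = 0$ and $\ell \neq 0$. Smoothness of $V$ yields $|(\Fc V)_k| \lesssim_M (1 + |k|)^{-M}$ for every $M$, so the term $\ell = 0$ gives directly $|(\Fc V)_{\widehat{m-n}}| \lesssim_N (1 + [m-n])^{-N}$. For $\ell \neq 0$, at least one coordinate satisfies $|\ell_j| \geq 1$, and since $|\widehat{m-n}_j| \leq K/2$, one has componentwise $|\widehat{m-n}_j + \ell_j K| \geq K|\ell_j|/2$ when $\ell_j \neq 0$, so $|\widehat{m-n} + \ell K| \gtrsim K |\ell|$. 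Summing $(1 + K|\ell|)^{-M}$ over $\ell \in \Z^d \setminus \{0\}$ gives a bound of order $K^{d-M}$ for $M > d$; choosing $M$ larger than $N + d$ and using the crude inequality $[m-n] \leq dK/2$ (so that $(1 + [m-n])^{-N} \gtrsim K^{-N}$) shows that this tail is dominated by $C_N (1 + [m-n])^{-N}$ uniformly in $K, m, n$.

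The only subtle point is precisely this $K$-uniformity: one must make sure that the constants in the tail estimate depend only on $N$ and the Schwartz seminorms of $(\Fc V)$, and that the comparison with $(1 + [m-n])^{-N}$ does not degrade as $K$ grows. Once the two estimates are combined, we conclude $\DNorm{M_{V,\bullet}}{0,N,0} < +\infty$ for every $N$, which together with the vanishing of $\Delta^\alpha M_{V,K}$ for $\alpha \neq 0$ establishes $M_{V,\bullet} \in \Ac_0^{\rm per}$.
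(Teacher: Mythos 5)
Your proposal is correct and follows essentially the same route as the paper: the identity and aliasing formula come from the computation preceding the lemma, the finite differences $\Delta^\alpha M_{V,K}$ vanish because the entries depend only on $m-n$, and the $\alpha=0$ seminorms are bounded by splitting the aliasing sum and using the rapid decay of $(\Fc V)_k$. Your handling of the $\ell\neq 0$ terms (re-centering at the representative $\widehat{m-n}$ and using $|\widehat{m-n}+\ell K|\gtrsim K|\ell|$ together with $[m-n]\leq dK/2$) is a slightly more uniform bookkeeping than the paper's case analysis over $\ell=0$, $\ell=\pm1$, $|\ell|\geq 2$, but it is the same argument in substance and the $K$-uniformity is handled correctly.
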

\begin{proof}
Only the last statement has not been proven. As a consequence of the smoothness of $V$ we have that for all $M$ there exits $C_M$ such that
$$
\forall\, j \in \Z^d, \quad |(\Fc V)_{j}| \leq C_M \frac{1}{(1 + |j|)^M}. 
$$
Hence 
$$
|\hat V_{a -b}| \leq C_M \sum_{\ell \in \Z^d} \frac{1}{(a - b + \ell K)^M } 
= C_M \left( \frac{1}{(1 + |a - b|)^M} + \sum_{\ell \neq 0} \frac{1}{(1 + |a - b + \ell K|)^M }\right). 
$$
To bound the first term in the right-hand side, we note that when $a, b \in \Z_K^d$, then either $a - b \in G_K = \{ -K/2, \ldots, K/2-1\}$ and then $|a - b| = [a - b]$, or  $a - b \notin G_K$, and then we have $|a - b| \geq K/2 \geq [a-b]$. In both case, we have $1 + |a - b| \geq 1 + [a - b]$. To deal with the second term, we note that 
when $|\ell| \geq 2$, we have $a - b + \ell K \geq K/2 \geq [a-b]$. Hence the sum of these terms is bounded by 
$$
C_M \frac{1}{(1 + [a-b])^{M-d}} \sum_{|\ell| \geq  2} \frac{1}{(1 + |a - b + \ell K|)^d }
$$
and the last sum converges independently of $K$, $a$ and $b$. 
It remains to consider the case $\ell = \pm 1$. In the case $a - b \in G_K$, we have as before $|a - b| \geq K/2 \geq [a-b]$ and  hence we can control the term by  $\frac{1}{(1 + [a-b])^M}$. Finally, when $a - b \notin G_K$ we are in a situation where $a - b \pm K = [a - b]$ while $a - b \mp K \geq K/2 \geq [a - b]$ and we can control the term in both situations.  
This shows that for all $N$, 
$$
|\hat V_{a-b}|\leq C_N \frac{1}{(1 + [a-b])^N}
$$ 
We conclude by noticing that as in the infinite case, we have $\Delta^\pm_{j} (V_{a-b}) = 0$. 
\end{proof}

\subsubsection{General finite difference operators}

By arguing as in Corollary \ref{cor28}, we prove the following result: 
\begin{corollary}
Let $K \in 2 \N^*$, $u = (u_a)_{a \in \Z_K^d}$ a sequence, and $\hat u = \Fc_K u$ its discrete Fourier transform. Any composition of difference operators $\delta_{j,K}^\pm$ and multiplication operators of the form $B_{V,K}$ for some smooth periodic functions $V$ acting on $u$, defines a discrete pseudo differential operator acting on $\hat u$. More precisely, for $P \in \N$ and for $p = 1,\ldots,P$  let $V_p$  be smooth functions, and $\epsilon_p \in \{0,\pm\}$. Then for all $K$ we  define
\begin{equation}
\label{mulAk}
A^K = \prod_{p = 1}^P M_{V_p,K} D^{\epsilon_p}_{j,K}, 
\end{equation}
with the convention $D^{0}_{j,K} = \mathrm{Id}_K$, and 
 the family $A^\bullet = \{A^K\}_{K \in 2 \N^*}$ defines an element of $\Ac_r^{\rm per}$ with $r = \sum_{p = 1}^P |\epsilon_p|$. 
\end{corollary}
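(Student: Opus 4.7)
The plan is to reduce the statement to a direct application of the two structural ingredients already established: the fact that each factor of the product belongs to a known class $\Ac^{\rm per}_r$, and the fact that the product in $\Ac^{\rm per}$ is a graded algebra morphism (Proposition \ref{Prop-Struct-Per}(i)). There is no need to redo any finite-difference combinatorics.

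First I would record the orders of the two families of building blocks. By the preceding lemma, $\{D^{\epsilon_p}_{j,K}\}_{K\in 2\N^*}$ belongs to $\Ac^{\rm per}_1$ when $\epsilon_p \in \{+,-\}$, while the convention $D^{0}_{j,K} = \mathrm{Id}_K$ gives an element of $\Ac^{\rm per}_0$ (the identity trivially satisfies Definition \ref{defper} with $r=0$). Thus in all cases $\{D^{\epsilon_p}_{j,K}\}_{K\in 2\N^*} \in \Ac^{\rm per}_{|\epsilon_p|}$. Similarly, by Lemma \ref{lemalias}, $\{M_{V_p,K}\}_{K\in 2\N^*} \in \Ac^{\rm per}_0$ for each smooth periodic $V_p$.

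Second, I would apply Proposition \ref{Prop-Struct-Per}(i) iteratively. Writing $A^\bullet = \prod_{p=1}^{P} M_{V_p,\bullet} D^{\epsilon_p}_{j,\bullet}$ as a product of $2P$ families, each product of two successive factors lies again in $\Ac^{\rm per}$, with orders adding up; a short induction on $P$ (or simply reading off the estimate \eqref{estprod} applied $2P-1$ times) yields
$$
A^\bullet \in \Ac^{\rm per}_{r}, \qquad r = \sum_{p=1}^{P} \bigl( 0 + |\epsilon_p|\bigr) = \sum_{p=1}^{P}|\epsilon_p|,
$$
with semi-norm bounds $\DNorm{A^\bullet}{\alpha,N,r}$ controlled, uniformly in $K$, by products of the semi-norms of the $M_{V_p,\bullet}$ and $D^{\epsilon_p}_{j,\bullet}$. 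The key point here is that the constants in Proposition \ref{Prop-Struct-Per} are independent of $K$, which is precisely what allows the uniform estimate \eqref{AKbound} to survive the composition.

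I do not expect any genuine obstacle: the whole content of the corollary is the observation that the difference operators and pointwise multiplications, which are themselves already identified as members of $\Ac^{\rm per}_1$ and $\Ac^{\rm per}_0$ respectively, compose inside the graded algebra $\Ac^{\rm per}$. The only minor point worth double-checking is the $\epsilon_p = 0$ case, which is handled by noting that $\mathrm{Id}_K(a,b) = \delta_{a,b}$ satisfies $|\Delta^\alpha \mathrm{Id}_K(a,b)| \lesssim (1+[a-b])^{-N}$ for every $N$ and every $\alpha$, hence belongs to $\Ac^{\rm per}_0$ with semi-norms uniform in $K$.
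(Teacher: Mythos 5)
Your proof is correct and follows exactly the route the paper intends: identify each factor as an element of $\Ac^{\rm per}_{|\epsilon_p|}$ or $\Ac^{\rm per}_0$ via the difference-operator lemma and Lemma \ref{lemalias}, then compose inside the graded algebra using Proposition \ref{Prop-Struct-Per}(i), with the remark about $\mathrm{Id}_K \in \Ac^{\rm per}_0$ handling $\epsilon_p = 0$. The paper gives no separate proof, simply citing the argument of Corollary \ref{cor28}, which is precisely this iteration of the product estimate.
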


Note that the decomposition \eqref{mulAk} correspond after Fourier transform to matrix the decomposition 
$$
\Fc_K A^K \Fc_K^{-1} = \prod_{p = 1}^P B_{V_p,K} \delta^{\epsilon_p}_{j,K}, 
$$
acting on $u$. 
For example, a discretization of the order 2 operator \eqref{order2} can be written (for example)
\begin{equation}
u_a \mapsto \sum_{j = 1}^d \delta_j^+ B_{\sigma,V} \delta_j^- u   + B_{X_j,K}  \delta_j^+ 
 - \sum_{j} \delta^+_j (B_{X_j,K} u) +  B_{V,K} u, 
\end{equation}
Which yields a familly of operators of order $2$, belonging to the class $\Ac_2^{\rm per}$. 

Similarly, all finite difference operators discretizing transport equation (like upwind, WENO schemes, $\ldots$) fall into the same category. Note that geometric considerations as in Section \eqref{geosec} can be made, according to the situation, the basic tool being given by the two previous Lemmas. 

\subsection{Pseudo-spectral methods}\label{pseudo}

When discretizing general pseudo-differential equations, we face the problem of approximating operators of the form $\Phi(- i \partial_x)$ when $\Phi$ is not a polynomial but can be a rational or any function  (see \eqref{ww} in the case of water waves). 
In this case, one possibility is to consider spectral methods: A discretization of $u \mapsto \Phi(-i \partial_x) u$ is directly written in the discrete Fourier space
\begin{equation}\label{APK}
\hat u_a \mapsto \Phi(a) \hat u_a =: (A_{\Phi,K}\hat u)_a, \qquad a \in \Z_K^d. 
\end{equation}
This yields to the evaluation of a diagonal operator in Fourier. 

Now to approximate an operator of the for $u(x) \mapsto V(x) u(x)$, pseudo-spectral methods consist in calculating 
$$
\hat u \mapsto \Fc_K B_{V,K}\Fc_K^{-1} \hat u
$$
where we recall that the operator $B_{V,K}$ is the pointwise multiplication by $V(ah)$ on the grid points $ah$, $a \in \Z_K^d$ (see section \ref{mult}). Note that the evaluation of $ \Fc_K B_{V,K}\Fc_K^{-1}$ doesn't cost too much since  the operator $B_{V,K}$ is diagonal and the discrete Fourier transforms $\Fc_K$ and $\Fc_K^{-1}$  can be efficiently implemented by Fast Fourier transform algorithm. 

This way, pseudo-spectral methods are efficient algorithm to discretize efficiently operators of the form \eqref{AAA}. In echo with Lemma \ref{phietv}, we can state the following result
\begin{lemma}
\label{phietvK}
Let $\Phi: \C^d \to \C$ and $V: \T^d \to \C$. Then 
\begin{itemize}
\item[(i)]
If $\Phi$ is $\mathcal{C}^\infty$ and if there exists $r \in \R$ such that 
for all $\alpha \in \N^d$ and $x \in \R^d$,  $|\partial_x^\alpha \Phi (x)| \lesssim_{r,\alpha} \langle x\rangle^{r - |\alpha|}$, then the family of matrices $A_{\Phi,\bullet} = \{A_{\Phi,K}\}_{K\in2\N^*} \in \Ac_r^{\rm per}$. 
\item[(ii)] If $V$ is $\mathcal{C}^\infty$ then the family $\{\Fc_K B_{V,K}\Fc_K^{-1}\}_{K\in2\N^*}  \in \Ac_0^{\rm per}$. 
\end{itemize}
As a corollary, pseudo-spectral discretization of compositions of the form \eqref{AAA} belong to $\Ac_{r}^K$, with $r$ as in Corollary \ref{cor28}. 
\end{lemma}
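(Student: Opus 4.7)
My plan is to adapt the proof of Lemma \ref{phietv} to the periodic setting, relying on Lemma \ref{lemalias} and Proposition \ref{Prop-Struct-Per}.

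For part (i), the crucial observation is that $A_{\Phi,K}$ is diagonal: $A_{\Phi,K}(m,n) = \Phi(\hat m)\,\delta_{m,n}$ where $\hat m \in G_K$ is the canonical representative of $m \in \Z_K^d$. The periodic finite-difference operators $\Delta_j^\pm$ shift both row and column indices by the same amount, so they preserve the diagonal support; i.e., $(\Delta^\alpha A_{\Phi,K})(m,n) = 0$ whenever $m \neq n$. Hence the decay factor $(1+[m-n])^{-N}$ in Definition \ref{defper} is handled trivially. It remains to bound the diagonal entries $(\Delta^\alpha A_{\Phi,K})(m,m)$ by $(1+[m])^{r-|\alpha|}$ uniformly in $K$. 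Following the Taylor-expansion argument leading to \eqref{estDA} in the infinite case, the finite differences acting on $A_{\Phi,K}(m,m)$ produce expressions involving $\Phi$ evaluated at integer shifts of $\hat m$, which are controlled by the growth hypothesis on $\partial^{|\alpha|}\Phi$ near $\hat m$, yielding the required pointwise estimate in terms of $|\hat m| = [m]$.

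Part (ii) is an immediate restatement of Lemma \ref{lemalias}: that lemma already establishes the identity $\Fc_K B_{V,K}\Fc_K^{-1} = M_{V,K}$ and proves that the family $\{M_{V,K}\}_{K \in 2\N^*}$ belongs to $\Ac_0^{\rm per}$. The corollary for compositions of the form \eqref{AAA} then follows from (i), (ii), and the graded-algebra structure given by Proposition \ref{Prop-Struct-Per}(i): a finite product of elements belonging respectively to $\Ac_{r_i}^{\rm per}$ lies in $\Ac_{r_1+\cdots+r_P}^{\rm per}$, mirroring exactly how Corollary \ref{cor28} was deduced from Lemma \ref{phietv} in the continuous setting.

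The main technical obstacle, in part (i), is ensuring that the constants are genuinely uniform in $K$ when $m$ lies near the boundary of $G_K$. For such $m$, the periodic identification makes $\widehat{m+e_j}$ jump (from $\hat m_j = K/2-1$ to $-K/2$) rather than incrementing smoothly, so the naive Taylor estimate must be supplemented. The key observation is that at such boundary indices $[m]$ is itself of order $K$, which allows the jump to be absorbed into the target bound $(1+[m])^{r-|\alpha|}$ up to a constant that does not depend on $K$. Making this step rigorous via a careful case analysis is the one genuine adaptation required beyond the continuous proof; once it is done, iterating the one-step Taylor/difference argument across the multi-index $\alpha$ gives the full estimate.
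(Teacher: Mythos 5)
Your overall route is the same as the paper's, which only indicates: repeat the argument of Lemma \ref{phietv} and combine it with the aliasing computation of Lemma \ref{lemalias}. Part (ii) of your proposal is indeed just a restatement of Lemma \ref{lemalias}, the final corollary does follow from (i), (ii) and Proposition \ref{Prop-Struct-Per}, and away from the boundary of $G_K$ your treatment of (i) (diagonality is preserved by $\Delta^\alpha$, then the Taylor/finite-difference estimate as in \eqref{estDA}) is exactly the intended argument. The difficulty is the step you yourself single out as the crux, and your resolution of it does not work.

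The claim that the wrap-around jump can be ``absorbed into $(1+[m])^{r-|\alpha|}$ because $[m]\sim K$'' is quantitatively wrong: at an aliasing index the increment of $\Phi(\hat m)$ is governed by $\Phi$ itself (or at best its first derivative), whereas the target bound loses a full power of $K$ for each difference taken. Concretely, in $d=1$ take $\Phi(x)=x$, which satisfies the hypotheses with $r=1$: for $\hat m=K/2-1$ one gets $(\Delta^+A_{\Phi,K})(m,m)=\Phi(-K/2)-\Phi(K/2-1)=1-K$, while Definition \ref{defper} requires a bound by $C\,(1+2[m])^{r-|\alpha|}=C$ uniformly in $K$. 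Even for the even symbol $\Phi(x)=x^2$ the second difference at $\hat m=K/2-1$ equals $\Phi(-K/2+1)-2\Phi(-K/2)+\Phi(K/2-1)=2-2K$, again against a required $O(1)$ bound. So the exponent $r-|\alpha|$ makes the allowed size smaller, not larger, than the jump, and no case analysis at the boundary can restore uniformity in $K$: as literally defined by \eqref{APK}, the family $\{A_{\Phi,K}\}$ violates the seminorm bounds at the frequencies $|\hat a_j|\simeq K/2$. To close part (i) one needs either additional structure making the diagonal symbol (close to) a smooth $K$-periodic function of the frequency --- which is automatic for $D^{\pm}_{j,K}$ in \eqref{DjK} and for $M_{V,K}$ in \eqref{MVK}, where the aliasing formula does the work --- or a modification of $A_{\Phi,K}$ near the cut-off (smooth truncation/de-aliasing of $\Phi$); the paper's one-sentence proof is silent on this point as well. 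With such a hypothesis or modification in place, your iteration of the one-step Taylor estimate does give the stated bounds, and parts (ii) and the corollary stand as you wrote them.
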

The proof is based on the same argument as the proof of Lemma \eqref{phietv} combined with the aliasing calculation of Lemma \eqref{lemalias}. 

\subsection{Approximation issues}

As explained in the introduction of Section \ref{per}, one of the main motivation in the introduction of families of periodic matrices $A^\bullet =  \{A^K\}_{K \in 2\N^*}$ in $\Ac^{\rm per}_r$ is to consider discretizations of PDEs, set on $\T^d$, on a grid of mesh $h=\frac{2\pi}K$. So naturally we are interested in the limit $K\to \infty$. \\
If we fix $K$, the matrix $A^K$ can be  embedded into $\Ac_r$ as an infinite dimensional operator that we denote by $\mathcal{I}A^K$ and simply defined as (see \eqref{GK})
\begin{equation}
\label{embed}
\mathcal{I}A^K(m,n) = 
\left|
\begin{array}{ll}
A^K(m,n) & \mbox{if}\quad  (m,n) \in  G_K \times G_K\\[1ex]
0 & \mbox{else}. 
\end{array}
\right.
\end{equation}
Then we can expect the convergence of $A^K$ towards some infinite dimensional operator $A \in \Ac_r$.
However, in general, $\Norm{\mathcal{I}A^K}{\alpha,r,N}$ cannot be controlled uniformly by 
$\DNorm{A^\bullet}{\alpha,r,N}$, 
as we can have $|m - n| > [m-n]$ even when $m,n \in \Z_K^d$. In fact the matrix was originally periodic and thus in the box  $G_K\times G_K$  the top right corner is identified with the top left corner (consider the cas $d=1$). In other words  there are possible large coefficients $\mathcal{I}A^K(m,n)$ when typically $m = -K/2$ and $n = K/2 - 1$ where $|m-n|=K-1$ but $[m-n]=1$. It is a typical {\em aliasing} phenomenon. For this reason, 
the convergence of $A^K$ towards some infinite dimensional operator $A \in \Ac_r$ does not hold in the norm of $\Ac_r$ after using the embedding $\mathcal{I}$, but has to be understood in a weaker sense.

To illustrate this phenomenon, let us consider the approximation of
 the operator $B_V$ given in \eqref{BV} by the family of periodic matrices $M_{V,\bullet}$ defined in \eqref{MVK}. We have 
$$
(\mathcal{I} M_{V,K}) (m,n) - B_V(m,n) = \left|
\begin{array}{ll}
 \sum_{\ell \neq 0} (\Fc V)_{m - n  + \ell K} & \mbox{if} \quad (m,n) \in G_K \times G_K \\[1ex]
(\Fc V)_{m-n} & \mbox{else}
\end{array}
\right.
$$
We see again that as in the previous example, we do not have that $\Norm{\mathcal{I} M_{V,K} - B_V}{\alpha,N,0}$ goes to zero, when $K \to \infty$. This can be seen by considering the entry $(m,n)$  with $ m = -K/2 $ and $n = K/2-1$. This coefficient is equal to $ \sum_{\ell \neq 0} (\Fc V)_{-K+1  + \ell K} $ which is not small in general since it contains $(\Fc V)_1$ ( for $\ell = 1$). However, we have the classical estimate with loss: for $0\leq s'<s$ (see for instance \cite{Lub08,Fao12})
$$
\Norm{(\mathcal{I} M_{V,K})  - B_V) x}{s'}  \leq \frac{C}{K^{s - s'}} \Norm{x}{s}.
$$

Convergence issues can also appear because of the order of the operators we are trying to approximate. For instance let us consider the family of diagonal operators $D^+_{j,\bullet} = \{ D^+_{j,K} \}_{K \in 2 \N^*}$ defined in  \eqref{DjK}. In that case the previous aliasing phenomenon does not occur since the matrices are diagonal and for all $\alpha \in \Z^d$ and $N \in \N$ there exists $C$ such that  
$$
\forall K\in 2\N^*, \quad 
\Norm{\mathcal{I}D^+_{j,K} }{\alpha,N,1} \leq C \DNorm{D^+_{j,\bullet} }{\alpha,N,1}. 
$$
On the other hand the limit operator of the $D_{j,K}^+$, when $K  \to \infty$ is naturally the operator $A_{\Phi_j}$ with $\Phi(x) = ix_j$, for $j \in \{1,\ldots,d\}$ and $x = (x_1, \ldots, x_d)$. But the function $m\mapsto i m - \frac{1}{h}( e^{ih m} - 1)$ does not go to $0$ uniformly in $h$. Hence $\Norm{A_{\Phi_j} - \mathcal{I} D^+_{j,K}}{\alpha,N,1}$ does not goes to $0$ when $K \to \infty$. However, using Taylor expansion, we obtain easily the classical estimate with loss 
\begin{equation}
\label{poulette}
\Norm{(A_{\Phi_j} - \mathcal{I} D^+_{j,K}) x}{s} \leq \frac CK  \Norm{x}{s+2}
\end{equation}
for some constant independent of $h$. Note that in contrast with \eqref{poulette}, we have the better estimates for spectral methods (see \eqref{APK}): for $0\leq s'<s$,
$$
\Norm{(A_\Phi - \mathcal{I} A_{\Phi,K}) x}{s'} \leq \frac{C}{K^{s - s' - r}} \Norm{x}{s}.
$$

\section{Applications}

We now give several original applications of the previous results. The first one revisits the classical error bound for splitting schemes (see \cite{JL}). 

\subsection{Error bounds for splitting schemes}\label{error}

In this section we consider the error of a splitting scheme of order $k\geq2$ for the abstract evolution equation
\begin{equation}\label{edo}
 \dot x=i Ax+i Bx, \quad x \in h^s, 
\end{equation}
where  $A\in\Ac_r$ and $B\in\Ac_\rho$ are Hermitian operators (see section \ref{hermi}), with $r,\rho\in\R$ and  $\rho<r$. 
We also consider space discretization of this problem, of the form
\begin{equation}\label{edoK}
 \dot y =i A^K y+i B^K y \quad u \in \ell^2( \Z_K^d), 
\end{equation}
where $A^\bullet = \{ A^K\}_{K \in 2\N^*}$ and $B^\bullet = \{ B^K\}_{K \in 2\N^*}$ are in $\Ac_r^{\rm per}$ and $\Ac_\rho^{\rm per}$. 

We assume that $iA$ is the generator of a strongly continuous semigroup $e^{itA}$ on $h^0=\ell^2$, and that the flow of $i (A + B)$ and $iB$ are well defined in $h^s$. We assume in particular that for all $s\geq0$, we have the estimates 
\begin{equation}
\label{croisob}
\exists\, M, C >0, \quad 
\forall\, t \in \R, \quad 
\Norm{e^{it ( A + B)}}{\Lc(h^s,h^s)} + \Norm{e^{itA}}{\Lc(h^s,h^s)} + \Norm{e^{it B}}{\Lc(h^s,h^s)} \leq M e^{Ct}  
\end{equation}
where $M$ and $C$ depend on $s$ but not on $t \in \R$. 

In discrete case, and thus in finite dimension, the definition of the flow is not an issue by using matrix exponential. But we assume similarly that the following holds:
\begin{multline}
\label{croisobK}
\exists\, M, C >0, \ \text{ such that }\
\forall\, t \in \R, \quad 
\forall\, K \in 2\N^* \quad \forall\, y \in \ell^2( \Z_K^d) \\
\Norm{e^{it ( A^K + B^K)}y }{s,K} + \Norm{e^{itA^K} y}{s,K} + \Norm{e^{it B^K} y}{s,K} \leq M e^{Ct}  \Norm{y}{s,K}. 
\end{multline}
We will see below examples where such bounds \eqref{croisob} and \eqref{croisobK} are satisfied. 


Using our new class of discrete pseudo-differential operators we recover and amplify a result of Jahnke-Lubich proved in \cite{JL}:  
\begin{theorem}[local error bounds]\label{thm-local}
Let $\rho,r\in\R$ with $\rho < r$.
\begin{itemize}\item[(i)] Consider $A\in\Ac_r$ and $B\in\Ac_\rho$ and
assume that the bound \eqref{croisob}  holds. Then we have the following local error bounds for the Lie and Strang splitting:  for all $s\geq0$ there exists $C_s>0$ and $\tau_0$ such that for all $x \in \ell^2(\Z^d)$ and $|\tau | \leq \tau_0$, 
\begin{align}\label{error-strang}
&\Norm{ e^{ i\tau A}e^{i\tau B}x -e^{i\tau(A+B)}x}{s}\leq C_s\tau^2\Norm{x}{s+r+\rho-1}\\
\label{error-strang2}
&\Norm{ e^{\frac i2\tau B}e^{i\tau A} e^{\frac i2\tau B}x-e^{i\tau(A+B)}x}{s}\leq C_s\tau^3\Norm{x}{s+2r+\rho-2}.
\end{align}
 \item[(ii)] Consider  $A^\bullet = \{ A^K\}_{K \in 2\N^*}$ in $\Ac_r^{\rm per}$  and $B^\bullet = \{ B^K\}_{K \in 2\N^*}$  in $\Ac_\rho^{\rm per}$ and assume that the bound \eqref{croisobK} holds. Then for all $s\geq0$ there exists $C_s>0$ and $\tau_0$ such that for all $|\tau | \leq \tau_0$, 
 for all $K \in 2 \N^*$ and all $y \in \ell^2(\Z_K^d)$, 
\begin{align}
\label{error-strangK}
&\Norm{e^{ i\tau A^K}e^{i\tau B}y -e^{i\tau(A+B)}y}{s,K}\leq C_s\tau^2\Norm{y}{s+r+\rho-1,K}\\
\label{error-strang2K}
&\Norm{e^{\frac i2\tau B}e^{i\tau A} e^{\frac i2\tau B}y-e^{i\tau(A+B)}y}{s,K}\leq C_s\tau^3\Norm{y}{s+2r+\rho-2,K}.
\end{align}
\end{itemize}
\end{theorem}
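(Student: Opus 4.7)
The plan is to follow the classical Jahnke--Lubich variation-of-constants strategy, now driven by the algebraic machinery of Sections 2--3: the commutator gain of Proposition \ref{Prop-Struct} (resp.\ Proposition \ref{Prop-Struct-Per}) will identify the order of the remainder, Lemma \ref{op} (resp.\ Lemma \ref{op-per}) will turn it into an operator bound $h^{s+\sigma}\to h^s$, and the stability assumption \eqref{croisob} (resp.\ \eqref{croisobK}) will control all intermediate propagators.

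First I would handle the Lie splitting. Set $E(\tau):=e^{i\tau A}e^{i\tau B}-e^{i\tau(A+B)}$. Using $e^{i\tau A}iB=iBe^{i\tau A}+i[e^{i\tau A},B]$ one finds $E'(\tau)=i(A+B)E(\tau)+i[e^{i\tau A},B]e^{i\tau B}$ with $E(0)=0$. Two successive Duhamel steps (the second based on the identity $[e^{isA},B]=i\int_0^s e^{i(s-\sigma)A}[A,B]e^{i\sigma A}d\sigma$, obtained by differentiating $e^{-isA}Be^{isA}$) give
\begin{equation*}
E(\tau)=-\int_0^\tau\!\!\int_0^s e^{i(\tau-s)(A+B)}e^{i(s-\sigma)A}[A,B]e^{i\sigma A}e^{isB}\,d\sigma\,ds.
\end{equation*}
Proposition \ref{Prop-Struct}(ii) places $[A,B]$ in $\Ac_{r+\rho-1}$, so by Lemma \ref{op} it is bounded $h^{s+r+\rho-1}\to h^s$; the four propagators are uniformly bounded on every Sobolev level by \eqref{croisob}. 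Integrating on the triangle $\{0\le\sigma\le s\le\tau\}$ produces the factor $\tau^2$ and yields \eqref{error-strang}.

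For Strang I would iterate the same procedure one step further. Let $S(\tau):=e^{\frac i2\tau B}e^{i\tau A}e^{\frac i2\tau B}$. A direct Taylor expansion shows that the constant, first and second order coefficients of $S(\tau)$ and $e^{i\tau(A+B)}$ coincide (this is the symmetry $S(-\tau)=S(\tau)^{-1}$ of Strang), so three iterated Duhamel steps should yield a representation
\begin{equation*}
S(\tau)-e^{i\tau(A+B)}=\int_{\Delta(\tau)}\Pi_L(\vec t)\,T(\vec t)\,\Pi_R(\vec t)\,d\vec t,
\end{equation*}
where $\Delta(\tau)$ is a three-dimensional simplex of volume $\lesssim\tau^3$, $\Pi_L,\Pi_R$ are products of the three propagators $e^{it A}$, $e^{it B}$ and $e^{it(A+B)}$, and $T$ is a linear combination of the nested commutators $[A,[A,B]]$ and $[B,[A,B]]$. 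Two applications of Proposition \ref{Prop-Struct}(ii) put these in $\Ac_{2r+\rho-2}$ and $\Ac_{r+2\rho-2}$; since $\rho<r$ the dominant loss is $2r+\rho-2$, and the same stability/boundedness argument as in the Lie case yields \eqref{error-strang2}.

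For part (ii) the argument runs verbatim \textit{mutatis mutandis}: Proposition \ref{Prop-Struct-Per} provides the same nested commutator orders with constants \emph{independent of $K$}, Lemma \ref{op-per} supplies the uniform $h^s\to h^{s-r}$ bound, and \eqref{croisobK} controls the three discrete propagators, so \eqref{error-strangK}--\eqref{error-strang2K} follow with $K$-independent $C_s$. The hard part will be the combinatorial book-keeping of the Strang expansion: one must organize the three Duhamel iterations so that the innermost factor $T$ is exactly a sum of the two double commutators above (and not, say, a triple product of single operators, which would give only an order $r+\rho$ gain), and so that every factor remaining between $T$ and $x$ is a single propagator covered by \eqref{croisob}/\eqref{croisobK} at a common Sobolev level. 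Once this organization is carried out cleanly, Proposition \ref{Prop-Struct} (or \ref{Prop-Struct-Per}) does all the real algebraic work.
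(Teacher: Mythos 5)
Your Lie-splitting argument is essentially the paper's: one Duhamel step plus the identity $[e^{isA},B]=i\int_0^s e^{i\sigma A}[A,B]e^{i(s-\sigma)A}\,\dd\sigma$, then $[A,B]\in\Ac_{r+\rho-1}$ from Proposition \ref{Prop-Struct}, Lemma \ref{op}, and the stability bound \eqref{croisob}; the paper merely works with $e^{i\tau(A+B)}e^{-i\tau A}e^{-i\tau B}-\mathrm{Id}$ instead of the bare difference, which is equivalent. Likewise your remark that part (ii) runs verbatim matches the paper, which only uses \eqref{-1K}, Lemma \ref{op-per} and \eqref{croisobK}, all uniform in $K$.

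For the Strang estimate, however, you have only asserted the decisive step. The claim that three iterated Duhamel steps "should yield" a simplex integral whose innermost factor is exactly a combination of $[A,[A,B]]$ and $[B,[B,A]]$, with only stability-controlled propagators on either side, is precisely the bookkeeping you yourself flag as the hard part; and the appeal to the symmetry $S(-\tau)=S(\tau)^{-1}$ is only a formal indication that the first two Taylor coefficients cancel --- for unbounded $A,B$ the cancellation has to be exhibited on well-defined objects. The paper closes this without any combinatorics: after a single Duhamel step the defect is conjugate (by $e^{\pm i\frac\tau2 A}$) to $C(\tau)=e^{i\frac\tau2 A}Be^{-i\frac\tau2 A}-B+\tfrac12 A-\tfrac12 e^{-i\tau B}Ae^{i\tau B}$, which visibly satisfies $C(0)=C'(0)=0$ and whose derivatives are computed in closed form, $\frac{\dd^k}{\dd\tau^k}C(\tau)=\big(\tfrac i2\big)^k e^{i\frac\tau2 A}\,\ad_A^kB\,e^{-i\frac\tau2 A}-(-i)^k\tfrac12\, e^{-i\tau B}\,\ad_B^kA\,e^{i\tau B}$; Taylor with integral remainder at order $2$, together with $\ad_A^2B\in\Ac_{2r+\rho-2}$, $\ad_B^2A\in\Ac_{r+2\rho-2}$ and $\rho<r$, then gives the $\tau^3$ bound with loss $2r+\rho-2$ directly. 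So your route has the right ingredients (same commutator orders, same use of stability), but as written the Strang half is a plan rather than a proof; completing it along your lines would essentially amount to reproducing this $C(\tau)$ computation (or the explicit Jahnke--Lubich defect representation), which should be carried out rather than postponed.
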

Comparing with Theorem 2.1 in \cite{JL}, our result is more general in the following sense:
\begin{itemize}
\item We do not assume that $B$ is bounded.
\item  The crucial assumption in  \cite{JL} is $\Norm{ [A,B]v}{s}\leq c_1 \Norm{(-A)^\alpha v}{s}$ and $\Norm{ [A,[A,B]]v}{s}\leq c_1 \Norm{(-A)^\beta v}{s}$ for some $\alpha,\beta\geq 0$. Here these hypothesis are satisfied ( with $\alpha=\frac{r+\rho-1}{r}$ and $\beta=\frac{2r+\rho-2}{r}$ as soon as $A$ and $B$ belong to $\Ac_r$ and $\Ac_\rho$ respectively. So $\alpha,\beta$ can be negative. Furthermore for the order 2 scheme, Jahnke-Lubich assume $\beta\geq1\geq\alpha$ which is not needed here. It will be important in section \ref{water} to apply our result to water waves model.
\item The local error bounds are readily carried over to discrete estimates by using the properties of $\Ac^{\rm per}$. 
\end{itemize}
\begin{proof}
The proof is essentially the one given in \cite{JL} but with more refined commutator estimates. 
We consider only the continuous case, {\em i.e.} time splitting methods applied to  \eqref{edo}. The discrete case \eqref{edoK} is exactly the same as we will only need commutator estimates \eqref{-1K} and the bound \eqref{croisobK} which are both assumed to be independent of $K$. 
\\
To prove \eqref{error-strang}, owing to \eqref{croisob} and taking $|\tau| \leq \tau_0$, it is equivalent to prove the same bound for the operator
$$
A(\tau) := e^{i \tau (A+B)} e^{- i \tau A} e^{-i \tau B}  - \mathrm{Id}. 
$$
We have $A(0) = 0$ and 
\begin{align*}
\frac{\dd}{\dd \tau}A(\tau)  &= e^{i \tau (A+B)} ( i (A + B) - i A - i  e^{- i \tau A} B e^{i \tau A} )  e^{- i \tau A}e^{-i \tau B} \\
&=  e^{i \tau (A+B)} ( i B  - i  e^{- i \tau A} B e^{i \tau A} )  e^{- i \tau A}e^{-i \tau B}. 
\end{align*}
Hence using \eqref{croisob}, we have if we assume $|\tau|\leq \tau_0$, 
$$
\Norm{A(\tau) x}{s} \lesssim_{s,\tau_0} \int_0^\tau \Norm{(B -  e^{- i \sigma A} B e^{i \sigma A}) x(\sigma)}{s} \dd \sigma
$$
where $x(\sigma) = e^{- i \tau A}e^{-i \tau B} x$ satisfies $\Norm{x(\sigma)}{s} \lesssim_{s,\tau_0} \Norm{x}{s}$ for all $s$. 
Let $B(\sigma)= B -  e^{- i \sigma A} B e^{i \sigma A}$. We have $B(0) = 0$ and $\frac{\dd}{\dd \sigma}B(\sigma) = -i  e^{- i \sigma A} [A,B] e^{i \sigma A}$. 
Hence for a given $\tilde x$ we have 
$$
\Norm{(B -  e^{- i \sigma A} B e^{i \sigma A}) \tilde x}{s} \lesssim_{s,\tau_0} \int_0^\sigma \Norm{[A,B] e^{i \alpha A} \tilde x}{s} \dd \alpha. 
$$
Thus, using that  $[A,B] \in \Ac_{r + \rho -1}$ (see Proposition \ref{Prop-Struct}) and Lemma \ref{op}, we deduce 
$$
\Norm{A(\tau) x}{s} \lesssim_{s,\tau_0} \int_0^\tau \int_{0}^\sigma  \Norm{[A,B] e^{i \alpha A} x(\sigma) }{s}\dd \sigma \dd \alpha  \lesssim  \tau^2\Norm{x}{s + r + \rho - 1}. 
$$
which yields the result. \\
To prove \eqref{error-strang2}, we proceed similarly by considering 
$$
A(\tau) := e^{i \tau (A+B)} e^{- i \frac{1}{2} \tau A} e^{-i \tau B} e^{- i \frac{1}{2} \tau A} - \mathrm{Id}. 
$$
We have 
$$
\frac{\dd}{\dd \tau}A(\tau) =  e^{i \tau (A+B)}B(\tau)  e^{- i \frac{1}{2} \tau A} e^{-i \tau B} e^{- i \frac{1}{2} \tau A} 
$$
with 
\begin{align*}
B(\tau) &= i (A + B) - i \frac12 A - i e^{- i \frac{1}{2} \tau A} B  e^{i \frac{1}{2} \tau A}-  \frac12 i e^{- i \frac{1}{2} \tau A} e^{-i \tau B} A e^{i \tau B} e^{i \frac{1}{2} \tau A}\\
&= + i(   B -     e^{- i \frac{1}{2} \tau A} B  e^{i \frac{1}{2} \tau A}) +   \frac{i}{2}  e^{- i \frac{1}{2} \tau A}  ( A - e^{-i \tau B} A e^{i \tau B}) e^{i \frac{1}{2} \tau A}
\\ & =  i e^{- i \frac{1}{2} \tau A} C(\tau) e^{ i \frac{1}{2} \tau A}
\end{align*}
where 
$$
C(\tau) =   e^{i \frac{1}{2} \tau A} B e^{- i \frac{1}{2} \tau A} -      B  + \frac12 A - \frac12 e^{-i \tau B} A e^{i \tau B}. 
$$
We have $C(0) = B(0) =  0$. Moreover, we have 
$$
\frac{\dd^k}{\dd \tau^k} C(\tau) =  \big(\frac{i}{2}\big)^k e^{i \frac{1}{2} \tau A} \mathrm{ad}_A^k B e^{-i \frac{1}{2} \tau A} 
- (-i)^k \frac12 e^{-i \tau B} (\ad^k_B  A)  e^{i \tau B}
$$
where $\mathrm{ad}_BA = [B,A]$. Note that using Proposition \ref{Prop-Struct} assertion {\em (ii)}, we have that $\ad^k_B  A \in \Ac_{r  + k\rho -k}$ and 
$\ad^k_A  B \in \Ac_{\rho + kr - k}$. As $\rho < r$, we deduce 
using \eqref{croisob} that for $|\tau |\leq\tau_0$, we have 
$$
\Norm{\partial_\tau^{k} C(\tau) x}{s} \lesssim_{s,k,\tau_0} \Norm{x}{s + k\rho -k}. 
$$
This leads to the result by using $k = 2$ and the fact that $C(0) = C'(0) = 0$. 
\end{proof}

\begin{remark}
\label{remhigh}
For a number $k \geq 1$, we denote by $SP_k(\tau,A,B)$ a splitting method of order $k$ for \eqref{edo} with time step $\tau$. High order splitting methods can be easily constructed by using composition algorithms, see for instance \cite{BCM08,HLW06} for a review. 
By using more elaborated algebraic formalism coming from the Baker-Campbell-Hausdorff formula, we infer the following: considering a splitting scheme of order $k\geq2$ applied to \eqref{edo} and \eqref{edoK}, then for all $s\geq0$ there exists $C_{s,k}>0$ and $\tau_0$ such that for all $x \in \ell^2(\Z^d)$ and $|\tau | \leq \tau_0$, $y \in \ell^2(\Z_K^d)$ and $K \in 2 \N^*$, 
\begin{align}\label{error-k}
&\Norm{ SP_k(\tau,A,B)x-e^{i\tau(A+B)} x}{s}\leq C_{s,k}\tau^{k+1}\Norm{x}{s+kr+\rho-k}\\
\label{error-k3}
&\Norm{ SP_k(\tau,A^K,B^K) y -e^{i\tau(A+B)} y}{s,K}\leq C_{s,k}\tau^{k+1}\Norm{y}{s+kr+\rho-k,K}
\end{align}
A complete proof in the general case is however out of the scope of this paper.
\end{remark}
\begin{remark}\label{rk2}
 It can be observed in the estimated \eqref{error-k} that the derivative loss, $\rho+k(r-1)$, decreases with the order of the scheme as soon as the order of $A$ and $B$ is strictly smaller than 1. Hence if furthermore $r<1$, there exists $k\geq2$ such that the error bound in the splitting scheme of order $k$ does not require any loss of derivative, i.e. for any $s\geq0$ there exists $C_s>0$ such that:
\begin{equation}\label{error-k2}
\| SP_k(\tau,A,B)x-e^{i\tau(A+B)}v\|_s\leq C_s\tau^{k+1}\|x\|_{s}.
\end{equation}
We will give below an example of such situation for the Strang splitting $(k =2)$ in the case of the water wave system. 
\end{remark}

\begin{remark}
We provide here only local error estimates, but global estimates can be easily obtained by following the classical argument of \cite{JL}. 
\end{remark}
 
\begin{remark}
\label{remsymp}
In the case of symplectic system of the form \eqref{eqsymp}, the previous theorem readily applies, as the commutator of block matrix expresses in terms of commutators of the sub-matrices. Hence the previous theorem hold true for symplectic systems 
\begin{equation}
\label{edosymp}
\dot x = (\boldsymbol{S}_1 + \boldsymbol{S}_2) x
\end{equation}
where $x \in h^s \times h^s$, and $\boldsymbol{S}_1$ and $\boldsymbol{S}_2$ satisfy the decomposition \eqref{eqsymp}, with blocks that are of order less than $r$ and $\rho$ respectively. 
\end{remark}

\subsection{Growth of Sobolev norms\label{sobgrr}}

We give now an example of situations where the commutator estimates can be used to prove long time estimates of Sobolev norm infering in particular assumptions \eqref{croisob} and \eqref{croisobK}. We consider again the equations \eqref{edo} and \eqref{edoK}, but we assume that $A$ and $A^K$ are diagonal operators. In this situation, we can control the evolution of the Sobolev norm of the solutions as follows: 
\begin{theorem} Let $\rho < 1$ and $r > \rho$. Let $A = A_\Phi \in \Ac_r$ where $\Phi$ satisfies the assumption {\em (i)} of Lemma \ref{phietv}, and for all $K \in 2 \N^*$, let $A^K = A_{\Phi,K}$ the spectral approximation of $A_\Phi$ as defined in Lemma \ref{phietvK}. Let $t \mapsto B(t)$ a continuous application from $\R$ to $\Ac_\rho$ with $B(t)^* = B(t)$, and let  $t \mapsto B^\bullet(t) = \{B^K(t)\}_{K \in 2 \N^*}$ a continuous application from $\R$ to $\Ac_\rho^{\rm per}$ be such that the $B^K(t)$ are hermitian for all $K$ and all $t$.  Assume that for all $\alpha,N$, there exists $C_{\alpha,N,\rho}$ such that 
\begin{equation}
\label{assB}
\forall\, t\in \R \quad \quad \Norm{B(t)}{\alpha,N,\rho}+  \DNorm{B^\bullet(t)}{\alpha,N,\rho} \leq C_{\alpha,N,\rho}. 
\end{equation}
Let $x(t) \in h^s$ and $x^K(t) \in \ell^2(\Z_K^d)$ be the solution of the systems
$$
 \dot x=i Ax+i B(t)x, \quad \mbox{and} \quad \dot x^K =i A^K x^K+i B^K(t) x^K. 
$$
Then we have for all $s\geq 0$, the existence of a constant $C_{s,\rho}$ such that with the notation $\langle t \rangle = (1 + |t|^2)^{\frac12}$, 
$$
\left|
\begin{array}{ll}
\forall \, t \in \R\quad  & \Norm{x(t)}{s} \leq C_{s,\rho} \langle t \rangle^{\frac{s}{1 - \rho}} \Norm{x(0)}{s}, \\[1ex]
 \forall\, K \in 2\N^*\quad \forall \, t \in \R\quad  &\Norm{x^K(t)}{s} \leq C_{s,\rho} \langle t \rangle^{\frac{s}{1 - \rho}} \Norm{x^K(0)}{s}. 
\end{array}
\right.
$$
\end{theorem}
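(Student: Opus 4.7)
The plan is to run a weighted energy argument, exploiting that $A$ is diagonal in Fourier (hence commutes with any Fourier multiplier) and that the commutator $[\Lambda^s,B(t)]$ of the Sobolev weight with the perturbation gains one order of regularity thanks to Proposition~\ref{Prop-Struct}.

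\medskip

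First I would introduce the smooth weight $\Phi_s(\xi) := (1+|\xi|^2)^{s/2}$ and the associated diagonal Fourier multiplier $\Lambda^s := A_{\Phi_s}$, which by Lemma~\ref{phietv}(i) belongs to $\Ac_s$ and induces on $h^s$ a norm equivalent to $\Norm{\cdot}{s}$; its discrete counterpart $\Lambda^s_K := A_{\Phi_s,K}$ has the same properties in $\Ac_s^{\rm per}$, uniformly in $K$ by Lemma~\ref{phietvK}(i). Setting $y(t) := \Norm{\Lambda^s x(t)}{0}^2$, direct differentiation gives $\frac{\dd y}{\dd t} = 2\mathrm{Re}\,\langle i\Lambda^s(A+B(t))x,\Lambda^s x\rangle_{0}$. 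Since $\Lambda^s$ and $A$ are both diagonal they commute, and $iA$ is anti-Hermitian, so the $A$-contribution is purely imaginary and vanishes. Writing $\Lambda^s B(t) = B(t)\Lambda^s + [\Lambda^s, B(t)]$, the $B(t)\Lambda^s$ piece gives again a purely imaginary contribution (as $iB(t)$ is anti-Hermitian), leaving
\[
\tfrac{\dd y}{\dd t} = -2\,\mathrm{Im}\,\langle[\Lambda^s,B(t)]\,x(t),\Lambda^s x(t)\rangle_{0}.
\]

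\medskip

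Here the decisive structural input enters: Proposition~\ref{Prop-Struct}(ii) gives $[\Lambda^s,B(t)] \in \Ac_{s+\rho-1}$, with seminorms bounded uniformly in $t$ thanks to hypothesis~\eqref{assB}. Lemma~\ref{op} and Cauchy--Schwarz then produce the differential inequality $y' \leq C\,\Norm{x(t)}{s+\rho-1}\Norm{x(t)}{s}$. Applied with $s=0$, the very same identity yields conservation of the $L^2$ norm, $\Norm{x(t)}{0}=\Norm{x(0)}{0}$. For $s\geq 1-\rho$, the Sobolev interpolation $\Norm{x}{s+\rho-1}\leq \Norm{x}{0}^{(1-\rho)/s}\Norm{x}{s}^{(s+\rho-1)/s}$ converts the above into the Bihari-type inequality $y' \leq C'\,\Norm{x(0)}{0}^{(1-\rho)/s}\,y^{1-(1-\rho)/(2s)}$. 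Integrating explicitly and using $\Norm{x(0)}{0}\leq\Norm{x(0)}{s}$, one obtains $y(t)^{(1-\rho)/(2s)} \leq \Norm{x(0)}{s}^{(1-\rho)/s}(1+C''t)$, which after raising to the power $2s/(1-\rho)$ and extracting a square root is exactly $\Norm{x(t)}{s}\leq C_{s,\rho}\langle t\rangle^{s/(1-\rho)}\Norm{x(0)}{s}$; negative times are handled by symmetry.

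\medskip

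The remaining range $0\leq s < 1-\rho$ is recovered by complex interpolation on the propagator $U(t,0)\colon x(0)\mapsto x(t)$: it is an isometry on $h^0$ and, by the previous step applied at some fixed $s_0>1-\rho$, bounded on $h^{s_0}$ by $C_{s_0}\langle t\rangle^{s_0/(1-\rho)}$, so Stein interpolation of the analytic family $\Lambda^z U(t,0)\Lambda^{-z}$ on $h^0$ gives the log-convexity of $s\mapsto \log\Norm{U(t,0)}{\Lc(h^s,h^s)}$ and hence the desired bound for every intermediate $s$. The discrete case is strictly parallel, replacing Proposition~\ref{Prop-Struct} and Lemma~\ref{op} by Proposition~\ref{Prop-Struct-Per} and Lemma~\ref{op-per}; $K$-uniformity of all constants is automatic from the uniformity built into the definition of $\Ac^{\rm per}$. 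The delicate point I expect is precisely this last interpolation step for small $s$: one must ensure that the Stein argument, when applied to the finite-dimensional spaces $\ell^2(\Z_K^d)$ weighted by $\Lambda^z_K$, produces constants independent of $K$, which does follow because the endpoint bounds at $s=0$ and $s=s_0$ are themselves $K$-uniform.
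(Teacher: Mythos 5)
Your proposal is correct and follows essentially the same strategy as the paper's proof: a weighted energy estimate in which hermiticity of $A$ and $B(t)$ and the diagonality of $A$ leave only the commutator of the Sobolev weight with $B(t)$, whose gain of one order (Proposition \ref{Prop-Struct} / Proposition \ref{Prop-Struct-Per}, uniformly in $t$ and $K$ via \eqref{assB}) combined with conservation of the $\ell^2$ norm produces the growth bound. The only difference is in the final bookkeeping: the paper turns the differential inequality into an integral one, iterates over $s=k(1-\rho)$ and interpolates, whereas you close it with an interpolation inequality and a Bihari-type ODE for $s\geq 1-\rho$ plus operator interpolation for $0\leq s<1-\rho$ --- both routes are valid and yield the same exponent.
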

\begin{proof}
We consider again only the continuous case, the discrete case being readily obtained by using the same calculations. Note that the proof follows arguments that can be found in \cite{BFG20}.

 Let $(x,y) = \sum_{n \in \Z^d} \bar x_n y_n$ be the standard $\ell^2$ scalar product. First we note that as $A$ and $B(t)$ are hermitian, the $\ell^2$ norm of $x$ is preserved for all times: we have $\Norm{x(t)}{0} = \Norm{x(0)}{0} = (x(0),x(0))^\frac12$.  Let $D$ be the diagonal operator defined by $D(m,n) = (1 + |n|^2)^{\frac12} \delta_{mn}$ for $(m,n) \in \Z^d \times \Z^d$. We have of course $\Norm{x}{s}^2 = (D^s x , D^s x) \in \R$. We calculate then that 
\begin{align*}
\frac{\dd}{\dd t} \Norm{x}{s}^2  & = \mathrm{Re} (D^s x, i D^s (A + B(t) )x ) \\
& = - \mathrm{Im} (D^s x,(A + B(t) )D^s x )  - \mathrm{Im} (D^s x, [D^s , A + B(t) ]x ) 
\end{align*}
As $A$ and $B(t)$ are Hermitian, the first term vanishes, and as $A$ is diagonal, it commutes with $D^s$. Hence, we have 
$$
\left|\frac{\dd}{\dd t} \Norm{x}{s}^2\right| \leq \Norm{x}{s} \Norm{[D^s,B(t)]x }{\ell^2} \lesssim \Norm{x}{s}  \Norm{x}{{s + \rho -1}} 
$$
where the last bound is obtained using commutator estimates and the assumption \eqref{assB}. By using a comparison Lemma with the ordinary differential equation $\dot y = C \sqrt{y}$ (see for instance Lemma 5.2 in \cite{BFG20}) we obtain 
$$
\Norm{x(t)}{s} \leq \Norm{x(0)}{s} + C \int_0^t \Norm{x(\sigma)}{s + \rho -1}. 
$$
for some constant $C$ independent of $t$. By using the preservation of the $\ell^2$ norm, we obtain (take $s=1-\rho$ in the previous estimate)
$$
\Norm{x(t)}{1- \rho} \lesssim \Norm{x(0)}{1-\rho} +  |t| \Norm{x(0)}{0}
$$
and by induction, for all $k \in \N$, 
$$
\Norm{x(t)}{k(1- \rho)} \lesssim \langle t \rangle^{k} \Norm{x(0)}{k(1 - \rho)}
$$
which shows the result by interpolation. 
\end{proof}
\begin{remark}
The previous result in the discrete case with a uniform bound in term of $K$ is original. Note however that it is only interesting form times smaller than $t < K^{1- \rho}$ as we always know that $\Norm{x(t)}{s,K} \leq K^{s} \Norm{x(t)}{0,K}=K^{s} \Norm{x(0)}{0,K}$.
We refer to \cite{FJ15} for the analysis of growth of Sobolev norm for fully discrete splitting schemes. 
\end{remark}

\subsection{Convergence without loss for  water wave models}\label{water}
As an interesting and non trivial example of application, 
we consider a water-wave models with non constant topography. In this section $x \in \T$, $\zeta: \T \to \R$ models the free surface elevation and $\psi: \T \to \R$ models the trace of the velocity potential at the surface.   The function $b:\T \to \R$ reflects the effect of the topography. The linearized water wave model around the flat surface and in presence of topography reads 
$$
\left|
\begin{array}{l}
\partial_t \zeta - G^N[b] \psi= 0   \\[1ex] 
\partial_{t} \psi +  \zeta = 0 
\end{array}
\right.
$$
where $G^N[b]$ is the Dirichlet to Neumann operator, see \cite{Lan13} and the reference therein. We can expand it with respect to $b$. At first order we can expand the operator in powers of $b$ as 
$
G^N[b]  = \Omega^2  + L_1(b) + O(b^2).
$
If we retain only the first term in powers of $b$ we obtain a system of the form 
\begin{equation}
\label{WW1}
\left|
\begin{array}{l}
\partial_t \zeta - \Omega^2 \psi= G\nabla \cdot ( b(x) \nabla G \psi) , \\[1ex] 
\partial_{t} \psi +  \zeta = 0,  
\end{array}
\right.
\end{equation}
where several choices for the operator $G$ can be made. We can retain in a general approximation (see \cite[Section 3.7.2]{Lan13} and \cite{CLS12})
\begin{equation}
\label{Om}
\Omega^2 = \frac{1}{\sqrt{\mu}}|D| \tanh( \sqrt{\mu} |D|) , \quad G = \mathrm{sech}( \sqrt{\mu}|D|). 
\end{equation}
where $\mu$ is a small parameter, 
and
$|D| = |-i \nabla_x|$ is the Fourier multiplier by $|k|$ for $k\in\Z$.  
The limit $\mu \to 0$ (and thus $\Omega^2 = |D|^2$) yields the linearized St-Venant equations with
$  G = 1.
$ 
Several other models (when $G$ is not trivial) with rational approximations of the water wave operators can be derived (for instance the Boussinesq approximations \cite[Section 5.1.3]{Lan13}). 

Note moreover that splitting methods are particularly adapted to \eqref{WW1}: the system can be divided at least into to systems 
\begin{equation}
\label{splitWW}
\left|
\begin{array}{l}
\partial_t \zeta - \Omega^2 \psi=0 \\[1ex] 
\partial_{t} \psi +  \zeta = 0
\end{array}
\right.
\quad \mbox{and}\quad
 \left|
\begin{array}{l}
\partial_t \zeta = G\nabla \cdot ( b(x) \nabla G \psi)  \\[1ex] 
\partial_{t} \psi = 0
\end{array}
\right.
\end{equation}
which can both be solved explicitly: the first one is diagonal in Fourier, and the second enjoys the conservation $\psi(t) = \psi(0)$ which allows an explicit solution $\zeta(t) = \zeta(0) + t G\nabla \cdot ( b(x) \nabla G \psi(0))$ which can be computed easily using Fast Fourier transformations. 
Note that the first system could be also split into two pieces, yielding to Verlet-like algorithms, while the explicit solution corresponds to a Deuflhard algorithm, in the usual terminology of highly oscillatory systems (see \cite[Chapter XIII]{HLW06}). 

To analyze the convergence of splitting schemes based on these decomposition, we make the usual change of variable for wave like systems: We define the new (symplectic) variables
\begin{equation}
\label{sympch}
\begin{pmatrix}
\xi \\ v
\end{pmatrix} = \begin{pmatrix} 
\Omega^{-\frac12}& 0 \\ 0 & \Omega^{\frac12}
\end{pmatrix}\begin{pmatrix}
 \zeta \\ \psi
\end{pmatrix}.
\end{equation}
After calculations, we find that the  Hamiltonian system \eqref{WW1} writes in the new variables
\begin{align}
\nonumber
\frac{\dd}{\dd t}\begin{pmatrix}
\xi \\ v
\end{pmatrix} 
&= \begin{pmatrix}
0 &\Omega   \\
-\Omega & 0
\end{pmatrix}
\begin{pmatrix}
\xi \\ v \end{pmatrix} + \begin{pmatrix}
0 & G \Omega^{-\frac12}\nabla \cdot ( b(x)G \Omega^{-\frac12}  \nabla \\
0 & 0
\end{pmatrix}
\begin{pmatrix}
\xi \\ v \end{pmatrix} \\
&:= \boldsymbol{S}_1
\begin{pmatrix}
\xi \\ v \end{pmatrix} + \boldsymbol{S}_2\begin{pmatrix}
\xi \\ v \end{pmatrix}
\label{wat}
\end{align} 
by using the notation \eqref{defS} for symplectic operators. 
The energy associated with the system is 
$$
H(\xi,v) = \frac12 \int \Omega |\xi|^2 + \Omega |v|^2 + b(x)|G \Omega^{-\frac12}  \nabla v|^2.
$$
Moreover, the flows $e^{t \boldsymbol{S}_1}$ and $e^{t \boldsymbol{S}_2}$ can be easily implemented: the first one decouples in Fourier modes, and the second one is triangular and can be calculated explicitly as explained above. 

In the water wave case \eqref{Om}, we have by using Lemma \ref{phietv} that $\Omega \in \Ac_{\frac12}$. Moreover, we  show that the operator 
$$
A = G \Omega^{-\frac12}\nabla \cdot b(x)G \Omega^{-\frac12}  \nabla 
$$
is smoothing in the following sense: $A \in \Ac_{\rho}$ for all $\rho < 0$. Indeed, in dimension 1  
this operator corresponds to the Fourier matrix with coefficients 
\begin{equation}
\label{Anm}
A(n,m) = G_n \Omega_{n}^{-\frac12} \hat b_{n-m}  G_m \Omega_{m}^{-\frac12}  (in) (im). 
\end{equation}
Asymptotically, for large $n$ and $m$, we have 
$$
| A(n,m)| \lesssim e^{- |n| - |m|} \hat b_{n-m}. 
$$
Thus we see that, even for rough bottom $b$ (with bounded Fourier coefficients for instance), $A \in \Ac_{\rho}$ for all $\rho \leq 0$. 
Hence in this situation, applying Theorem \ref{thm-local}, splitting methods based on the decomposition \eqref{splitWW} converge in any Sobolev space without loss. 
We thus summarize the results of Theorem \ref{thm-local} applied to this situation\footnote{It is also easy to prove the stability estimates \eqref{croisob}}: 
\begin{theorem}
With the notation \eqref{wat}, if $b \in L^\infty(\T)$, we have the following local error estimates for the water wave equation with  \eqref{Om} then for all $s\geq0$ there exists $C_s>0$ and $\tau_0$ such that for all $x  = (\xi,v)^T \in h^s \times h^s$ and $|\tau | \leq \tau_0$, 
\begin{align*}
&\Norm{ e^{ i\tau \boldsymbol{S}_1}e^{i\tau \boldsymbol{S}_2}x -e^{i\tau(\boldsymbol{S}_1 + \boldsymbol{S}_2)}x}{s}\leq C_s\tau^2\Norm{x}{s},\\
&\Norm{ e^{\frac i2\tau \boldsymbol{S}_1}e^{i\tau \boldsymbol{S}_2} e^{\frac i2\tau \boldsymbol{S}_1}x-e^{i\tau(\boldsymbol{S}_1 + \boldsymbol{S}_2)}x}{s}\leq C_s\tau^3\Norm{x}{s}.
\end{align*} 
%
\end{theorem}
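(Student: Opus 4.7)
The plan is to apply Theorem \ref{thm-local} in its block-matrix version (Remark \ref{remsymp}) to the decomposition $\boldsymbol{S}_1 + \boldsymbol{S}_2$ given in \eqref{wat}, with carefully chosen orders $r$ and $\rho$. The combination of Lemma \ref{phietv} and Remark \ref{rk2} suggests that a strictly subunitary order in $\boldsymbol{S}_1$ combined with a smoothing $\boldsymbol{S}_2$ should eliminate the loss of derivative in the local error bound. The key structural observation is that in this water-wave setting, $\boldsymbol{S}_1$ is of order exactly $\frac12$ while $\boldsymbol{S}_2$ is in fact in every $\Ac_\rho$ with $\rho < 0$ thanks to the exponential decay of the symbol of $G=\mathrm{sech}(\sqrt{\mu}|D|)$.

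First I would identify the orders of the two blocks. By Lemma \ref{phietv} applied to $\Phi(k)= \mu^{-1/4}(|k|\tanh(\sqrt\mu |k|))^{1/2}$, one checks that $\Omega \in \Ac_{1/2}$; hence the antidiagonal block matrix $\boldsymbol{S}_1$ has all its components of order $\leq \frac12$, so $\boldsymbol{S}_1 \in \Ac_{\boldsymbol{r}_1}$ with all $r_1$-entries $\leq \frac12$. For $\boldsymbol{S}_2$, the single nonzero block is the operator $A$ of \eqref{Anm}; because $G_n = \mathrm{sech}(\sqrt\mu |n|)$ decays exponentially, the bound $|A(n,m)|\lesssim e^{-|n|-|m|} |\hat b_{n-m}|$ immediately gives $A\in \Ac_\rho$ for any $\rho\in\R$, in particular $\rho$ arbitrarily negative. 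Applying the finite-difference operators $\Delta^\alpha$ only multiplies the inner factors $G_n\Omega_n^{-1/2}(in)$ by bounded differences, so the same exponential factor still controls everything; the estimate on $|\hat b_{n-m}|$ only requires $b \in L^\infty$ to obtain a uniform bound, which is enough to place $A$ (and hence $\boldsymbol{S}_2$) in $\Ac_\rho$ for, say, any $\rho \leq -1$.

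Next I would verify the stability bounds \eqref{croisob} for each of the three flows. For $e^{t\boldsymbol{S}_1}$ the $2\times 2$ system diagonalizes in Fourier into a block-skew-symmetric matrix with eigenvalues $\pm i\Omega(k)$; each Fourier mode is rotated by an $L^2$-isometry, so the flow preserves every Sobolev space $h^s\times h^s$ isometrically. For the perturbed flow $e^{t(\boldsymbol{S}_1 + \boldsymbol{S}_2)}$, the system is Hamiltonian with energy $H(\xi,v)$ given after \eqref{wat}; this energy controls the $h^{1/2}\times h^{1/2}$ norm, and the higher Sobolev bounds propagate as in Section \ref{sobgrr} (the ``pseudo-differential perturbation of a diagonal hermitian operator'' framework) because $\boldsymbol{S}_2$ is smoothing. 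For $e^{t\boldsymbol{S}_2}$ the flow is explicit and upper-triangular, $\psi$ is preserved and $\xi$ moves by a bounded amount in every Sobolev norm.

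Finally I would apply Theorem \ref{thm-local} with $r=\frac12$ and $\rho\leq -1$. The Lie estimate yields a loss of $r+\rho-1 \leq -\frac32 \leq 0$, and the Strang estimate yields a loss of $2r+\rho-2 \leq -2 \leq 0$; in both cases we may absorb negative Sobolev exponents into the stronger $h^s$ norm, which produces the stated no-loss bounds. The main obstacle I expect is not the commutator count (which comes for free from Proposition \ref{Prop-Struct} once the orders have been identified) but the bookkeeping to verify the hypothesis \eqref{croisob} uniformly for all three flows in $h^s\times h^s$ for arbitrary $s\geq 0$: the symplectic change of variables \eqref{sympch} is only an isomorphism on spaces of zero-average functions, so a brief care must be taken to reduce to that setting or to treat the constant Fourier mode separately before invoking the abstract theorem.
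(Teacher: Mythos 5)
Your proposal follows essentially the same route as the paper: identify $\Omega\in\Ac_{\frac12}$ via Lemma \ref{phietv}, observe from \eqref{Anm} that the exponential decay of $G=\mathrm{sech}(\sqrt\mu|D|)$ makes the off-diagonal block of $\boldsymbol{S}_2$ belong to $\Ac_\rho$ for every $\rho\leq 0$ with only bounded Fourier coefficients of $b$, and then invoke Theorem \ref{thm-local} (through Remark \ref{remsymp}) so that the losses $r+\rho-1$ and $2r+\rho-2$ are nonpositive and can be absorbed into $\Norm{x}{s}$. Your extra care about the stability bounds \eqref{croisob} and the zero Fourier mode is consistent with the paper, which disposes of the stability estimates in a footnote (a simple Duhamel--Gronwall argument using that $e^{t\boldsymbol{S}_1}$ is an isometry in Fourier and $\boldsymbol{S}_2$ is bounded suffices), so the argument is correct.
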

Note that these results translate automatically to the original variables by using the change of variables \eqref{sympch}. Theorem \ref{thm-local} can also be applied in various situations where $\Omega$ and $G$ are of some given orders. 

\subsection{Normal form as preconditioners}\label{precon}

Considering splitting schemes for \eqref{edo}, we have seen in Remark \ref{rk2} that when  $A$ is of order $r <1$, it is  possible to find a splitting methods based on the underlying decomposition that converge without loss of derivative. In this section, we show that in many situations, it is possible to make a change of variable putting the system into this form. It is based on a normal form transformation in the spirit of \cite{BGMR}. 

Rather than giving too general result, we will focus on the following system: We consider the Schr\"odinger equation in dimension $1$, 
\begin{equation}
\label{LS0}
 \partial_t u =-i \Delta u + iV(x) u
\end{equation}
where $V$ is a smooth potential. 
Writing $x = \hat u$, the equation becomes
\begin{equation}
\label{LS}
\partial_t  x = A x + Bx \quad x = (x_a)_{a\in \Z} = (\hat u_a)_{a \in \Z},
\end{equation}
where $A$ and $B$ are the matrix defined by 
$$
A(m,n) = |m|^2 \delta_{m,n}\quad \mbox{and} \quad B(m,n) = \hat V(m-n) = (\Fc V)(m-n).  
$$
So in this case $A\in\Ac_2$ and $B\in\Ac_0$ and if we apply directly Theorem \ref{thm-local}, we will obtain a Lie spiting scheme that converges with a loss of 1 derivative and a Strang spliting scheme that converges with a loss of 2 derivatives.\\
Now let us define the operator $X$ by the formula
$$
X(m,n) = 
\left\{
\begin{array}{l} - \displaystyle \frac{\hat V(m-n)}{i (|m|^2 - |n|^2)}  \quad \mbox{when} \quad m \neq \pm n \\[2ex]
0 \quad \mbox{for} \quad m = \pm n
\end{array}
\right.
$$
By using the relation $m^2 - n^2 = (m + n) (m -n)$, we deduce that $|m^2 - n^2| \geq |m| + |n| $ for $m \neq \pm n$, and we deduce easily that $X \in \Ac_{-1}$. Furthermore $X$ is hermitian.\\
In particular, $X$ is a bounded operator from $h^s$ to $h^s$, and 
we can define the transformation $y = e^{iX} x$, from $h^s$ to $h^s$ for all $s\geq0$. 
 
In the new variable $y = e^{i X} x$ the system  reads
$$
\dot y = i e^{iX} (A + B) e^{-iX} y
$$
with $A \in \Ac_2$ and $X \in \Ac_{-1}$. Let ${\rm ad}_X(A) = i [X,A]$, we have 
$$
{\rm ad}_X^j(A) \in \Ac_{2 - 2j}.
$$
In particular, for $j \geq 2$, we have ${\rm ad}_X^j(A) \in \Ac_{-2}$.\\
Now we taylor expand 
$$
e^{i  X}\, (A + B) \,  e^{-i X } = A + B + i [X,A] + i [X,B] + R
$$
where the remainder
$$
R := \int_0^1(1-s)^{2} \e^{-i s  X}\, {\rm ad}_X^{2}(A + B)\, \e^{i sX} \dd s \in \Lc(h^{s},h^{s+2})
$$
is smoothing and gain $2$ derivatives. 
Moreover, as $B \in \Ac_0$ and $X \in \Ac_{-1}$, we have $[X,B] \in \Ac_{-2}$ which is also smoothing and gain $2$ derivatives. 

Eventually, we have by definition of $X$ 
$$
A + B + i [X,A] = A + Z
$$
where 
$$
Z(m,n) = B(m,n) \quad \mbox{for} \quad m = \pm n, \quad Z(m,n) = 0 \quad \mbox{for} \quad m \neq \pm n.
$$
In particular $Z$ is block diagonal and can be easily implemented. 

So far, we have shown that the equation in $y$ can be written 
$$
\dot y = i(A + Z +R)y
$$
where $R \in \Lc(h^{s},h^{s+2})$. 

We can then split the system into 
$$
\dot y = i(A + Z) y \quad \mbox{and} \quad \dot y = iRy. 
$$
The first system is block diagonal and can be easily implemented, and the $R$ part is smoothing and is thus nonstiff. We can therefore easily implement $e^{i (A + Z)}$ and $e^{i R}$ and use splitting approximations. 

In this case, the main error is dominated by the commutator 
$$
[A + Z,R] \in \Lc(h^s,h^s). 
$$
which can be seen by noticing that $R \in \Lc(h^s, h^{s+2})$ and $A + Z \in \Lc(h^s,h^{s-2})$ for all $s$. 
Therefore we obtain the following result: 

\begin{proposition}\label{yesmyfriend}
The Lie-splitting method $e^{i \tau (A + Z)} e^{i \tau R}$ to approximate the solution of $e^{i \tau (A + Z + R)}$ converges without loss of derivative. As a consequence, the scheme
$$
e^{-i X}e^{i \tau (A + Z)} e^{i \tau R} e^{iX}
$$
defines an order $1$ scheme for the equation \eqref{LS} without loss of derivative:  we have 
$$
\Norm{e^{i \tau(A + B) }x - e^{-i X}e^{i \tau (A + Z)} e^{i \tau R} e^{iX} x}{s} \leq C \tau^2 \Norm{x}{s}
$$
for $\tau$ small enough, $s \geq 0$ and a constant $C$ independent of $x$. 
\end{proposition}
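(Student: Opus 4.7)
The plan is to reduce the estimate to a Lie-splitting error bound in the conjugated variable $y = e^{iX}x$. The preceding paragraphs establish that if $y$ solves the transformed equation then $\dot y = i(A+Z+R)y$, yielding the exact conjugation identity $e^{i\tau(A+B)} = e^{-iX}\,e^{i\tau(A+Z+R)}\,e^{iX}$. Subtracting the proposed scheme gives
\[
e^{i\tau(A+B)}x - e^{-iX} e^{i\tau(A+Z)} e^{i\tau R} e^{iX}x = e^{-iX}\Bigl( e^{i\tau(A+Z+R)} - e^{i\tau(A+Z)} e^{i\tau R}\Bigr) e^{iX} x,
\]
so it suffices to bound $e^{\pm iX}$ on $h^s$ and to estimate the bracketed local splitting error on $h^s$ without loss of derivative.

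First I would prove that $e^{\pm iX}: h^s \to h^s$ is bounded for every $s \geq 0$. On $\ell^2 = h^0$ this is immediate, since $X \in \Ac_{-1}$ is hermitian and so $iX$ generates a unitary group. For $s > 0$, I differentiate $\Norm{e^{itX}x}{s}^2$ and obtain the commutator $[D^s, X]$, where $D^s$ is the diagonal operator with symbol $(1+|n|^2)^{s/2}$; by Lemma~\ref{phietv} we have $D^s \in \Ac_s$, so Proposition~\ref{Prop-Struct}(ii) gives $[D^s,X] \in \Ac_{s-2}$, and therefore $[D^s,X]: h^s \to h^2 \hookrightarrow h^0$ continuously. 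A Gronwall argument exactly as in Section~\ref{sobgrr} then yields $\Norm{e^{itX}x}{s} \lesssim e^{Ct}\Norm{x}{s}$.

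Second, I would reproduce the Duhamel argument of the proof of Theorem~\ref{thm-local}(i) for the splitting of $i(A+Z+R)$ into $i(A+Z)$ and $iR$. That argument ultimately reduces the bound on the bracket to an estimate of the form
\[
\Norm{[A+Z,R]\, e^{i\alpha(A+Z)} x(\sigma)}{s} \lesssim \Norm{x}{s},
\]
uniformly in $\alpha,\sigma$ bounded. This inequality holds on $h^s$ with no loss of derivative, which is precisely the point emphasised just above the proposition: since $R \in \Lc(h^s, h^{s+2})$ and $A+Z \in \Lc(h^s, h^{s-2})$ for all $s$, one has $[A+Z,R] \in \Lc(h^s,h^s)$; combined with the unitarity of $e^{i\alpha(A+Z)}$ on $h^s$ (coming from the hermitianity of $A+Z$), this closes the estimate and delivers the desired $O(\tau^2)$ bound on the bracket. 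Composing with the $h^s$-bounds for $e^{\pm iX}$ then proves the proposition.

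The main obstacle is verifying that $R:h^s \to h^{s+2}$ uniformly in $s \geq 0$. Writing
\[
R = \int_0^1 (1-s)\, e^{-isX}\, \ad_X^2(A+B)\, e^{isX}\,\dd s,
\]
Proposition~\ref{Prop-Struct}(ii) applied twice gives $\ad_X^2(A) \in \Ac_{-2}$ and $\ad_X^2(B) \in \Ac_{-4} \subset \Ac_{-2}$, so $\ad_X^2(A+B): h^s \to h^{s+2}$ is bounded by Lemma~\ref{op}. Combining with the $h^s$-boundedness of $e^{\pm isX}$ from the first step, uniform in $s\in[0,1]$, yields the required mapping property of $R$. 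A parallel verification of the stability assumption \eqref{croisob} for the three flows $e^{i\tau(A+Z)}$, $e^{i\tau R}$ and $e^{i\tau(A+Z+R)}$ completes the argument: the first is unitary on each $h^s$ since $A+Z$ is hermitian, the second is controlled since $R$ is bounded, and the third is controlled via the conjugation identity together with the $h^s$-bound on $e^{\pm iX}$.
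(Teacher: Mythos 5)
Your proposal is correct and follows essentially the same route as the paper: conjugate by $e^{\pm iX}$, use the normal-form decomposition $A+Z+R$ with $R$ smoothing, and run the Duhamel/commutator argument of Theorem \ref{thm-local} with the key observation $[A+Z,R]\in\Lc(h^s,h^s)$; the auxiliary facts you verify ($h^s$-boundedness of $e^{\pm iX}$, $R\in\Lc(h^s,h^{s+2})$, stability of the three flows) are precisely what the paper leaves implicit. One small precision: the $h^s$-isometry of $e^{i\alpha(A+Z)}$ does not follow from hermitianity alone, but from the fact that $A$ is diagonal and $Z$ couples only the modes $m$ and $-m$, which carry equal weights, so $A+Z$ commutes with $D^s$ (alternatively, your Gronwall argument for $e^{itX}$ applies verbatim since $Z\in\Ac_0$ is hermitian).
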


From the implementation point of view,  $e^{iX}$ can be seen as a preconditioner, that only need to be evaluated at the beginning and end of the simulation, owing to 
$$
\Big( e^{-i X}e^{i \tau (A + Z)} e^{i \tau R} e^{iX} \Big)^n = e^{-i X} \Big( e^{i \tau (A + Z)} e^{i \tau R} \Big)^n e^{iX}.
$$
Furthermore we stress out that, since $Z$ is block diagonal and $R$  is smoothing , the splitting scheme $e^{i \tau (A + Z)} e^{i \tau R} $ can be easily implemented.\\
We conclude by emphasizing that this normal form strategy, viewed as a preconditioner construction, can be extended to any order of approximation and can be generalized to many situations, see \cite{BGMR}. Note also that, again, this result can be  translated {\em mutatis mutandis} to pseudo-spectral and finite difference approximations using section \ref{per}. A general study is however out of the scope of this paper. 

\begin{appendix}
\section{Young inequality for convolution}
Let $x = (x_n)_{n \in \Z^d}$ and $y = (y_n)_{n \in \Z^d}$ two sequences. We define $z = x * y = (z_n)_{n \in \Z^d}$ the sequence 
$$
z_n = \sum_{\substack{ p,q \in \Z^d \\ n = p + q}} x_p y_q, \quad n \in \Z^d. 
$$
We also define for $p \geq 1$, 
$$
\Norm{x}{\ell^p} = \left( \sum_{n \in \Z^d} |x_n|^p \right)^{\frac{1}{p}}. 
$$
And we recall the following H\"older inequality: for two sequences $x, y$
$$
\left|\sum_{k} x_k y_k \right| \leq \Norm{x}{\ell^p} \Norm{y}{\ell^q} \quad \mbox{for} \quad 1 = \frac{1}p + \frac1q, 
$$
which is itself a consequence of the Young inequality for product: $\forall a,b\geq0$ we have  $ab \leq \frac{a^p}{p} + \frac{b^q}{q}$. This H\"older inequality is easily generalized by induction to 
$$
\left|\sum_{k} \prod_{i = 1}^N x_k^{(i)}  \right| \leq \prod_{i = 1}^N \Norm{x^{(i)}}{\ell^{p_i}} \quad \mbox{for} \quad \sum_{i = 1}^N \frac{1}{p_i} = N, 
$$
for any sequences $x^{(i)}$, $i = 1, \ldots, N$ with $N \in \N$.  
\begin{lemma}
\label{lemyoung}
For two sequences $x$ and $y$ indexed by $\Z^d$, we have 
$$
\Norm{x*y}{\ell^r} \leq \Norm{x}{\ell^p} \Norm{y}{\ell^q} , \quad \mbox{for} \quad 1 + \frac{1}{r} = \frac{1}{p} + \frac{1}{q}. 
$$
\end{lemma}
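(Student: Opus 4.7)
The plan is to prove Young's inequality by the standard three-factor Hölder trick, writing $|x_p y_q|$ as a product of three pieces whose exponents are calibrated so that summing in $n$ after a Hölder application factorizes cleanly.

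First I would treat the main case where $1 < p, q, r < \infty$ (the endpoint cases $p=1$, $q=1$ or $r=\infty$ follow either trivially or by obvious modifications). The key algebraic identity is the decomposition
$$
|x_p y_q| = \bigl(|x_p|^{p}|y_q|^{q}\bigr)^{1/r}\cdot |x_p|^{1-p/r}\cdot |y_q|^{1-q/r},
$$
valid because the exponents of $|x_p|$ sum to $1$ and likewise for $|y_q|$. I would then apply the three-term Hölder inequality (recalled in the preamble to the lemma) with conjugate exponents $(r,\alpha,\beta)$ where $\alpha = \frac{pr}{r-p}$ and $\beta=\frac{qr}{r-q}$. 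The verification $\frac{1}{r}+\frac{1}{\alpha}+\frac{1}{\beta}=1$ uses exactly the hypothesis $1+\frac{1}{r}=\frac{1}{p}+\frac{1}{q}$, and by construction $\alpha(1-p/r)=p$ and $\beta(1-q/r)=q$.

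Applied to the inner sum defining $z_n=\sum_{p+q=n}x_p y_q$, Hölder gives
$$
|z_n|\le \Bigl(\sum_{p+q=n}|x_p|^p|y_q|^q\Bigr)^{1/r}\Bigl(\sum_{p+q=n}|x_p|^p\Bigr)^{(r-p)/(pr)}\Bigl(\sum_{p+q=n}|y_q|^q\Bigr)^{(r-q)/(qr)}.
$$
The last two factors are bounded by $\Norm{x}{\ell^p}^{1-p/r}$ and $\Norm{y}{\ell^q}^{1-q/r}$ respectively (extending the sum over $p+q=n$ to all $p$, resp.\ all $q$). Raising to the $r$-th power and summing over $n\in\Z^d$, I would use Tonelli to exchange sums:
$$
\sum_{n\in\Z^d}\sum_{p+q=n}|x_p|^p|y_q|^q=\sum_{p,q\in\Z^d}|x_p|^p|y_q|^q=\Norm{x}{\ell^p}^p\Norm{y}{\ell^q}^q.
$$
Combining the exponents $(r-p)+(r-q)+p+q$ over the three factors gives exactly $r$ copies of $\Norm{x}{\ell^p}\Norm{y}{\ell^q}$, and taking an $r$-th root finishes the proof.

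The only real obstacle is the careful bookkeeping of the exponents and checking the endpoint cases; the computation itself is purely algebraic once the three-factor decomposition is set up.
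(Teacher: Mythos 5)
Your proof is correct and is essentially the same argument as the paper's: the identical three-factor decomposition $|x_{n-k}||y_k| = (|x_{n-k}|^p|y_k|^q)^{1/r}|x_{n-k}|^{(r-p)/r}|y_k|^{(r-q)/r}$, the trilinear H\"older inequality with exponents $r$, $\frac{pr}{r-p}$, $\frac{qr}{r-q}$, and a final Tonelli exchange of sums. Your explicit remark about the endpoint cases $p=1$, $q=1$, $r=\infty$ is a harmless addition the paper leaves implicit.
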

\begin{proof}
Let us denote $z = x * y$,  we  have
\begin{align*}
|z_n| &\leq \sum_{k \in \Z^d} |x_{n-k}| |y_k| \\
&= \sum_{k \in \Z^d} (|x_{n-k}|^p |y_k|^q)^{\frac{1}{r}}   | x_{n-k}|^{\frac{r- p}{r}} |y_k|^{\frac{r-q}{r}} \\
&\leq \left( \sum_{k} |x_{n-k}|^p |y_k|^q \right)^{\frac{1}{r}} \left( \sum_k | x_{n-k}|^p \right)^{\frac{r -p}{rp}}  \left( \sum_k |y_k|^q \right)^{\frac{r - q}{rq}}\\
&=  \left( \sum_{k} |x_{n-k}|^p |y_k|^q \right)^{\frac{1}{r}}  \Norm{x}{\ell^p}^{\frac{r-p}{r}}  \Norm{y}{\ell^q}^{\frac{r-q}{r}}, 
\end{align*}
where we used the generalized trilinear H\"older inequality for the decomposition 
$$
\frac{1}{r} + \frac{r -p}{rp} + \frac{r - q}{rq} = \frac{1}{p} + \frac{1}{q} - \frac{1}{r} = 1. 
$$
Then we obtain 
\begin{align*}
\sum_{n} |z_n|^r &\leq \Norm{x}{\ell^p}^{r-p}  \Norm{y}{\ell^q}^{r-q}\left( \sum_{k,n} |x_{n-k}|^p |y_k|^q \right) = \Norm{x}{\ell^p}^{r}  \Norm{y}{\ell^q}^{r}. 
\end{align*}
\end{proof}

\end{appendix}

\end{document}